\documentclass[a4paper,reqno]{amsart}
\usepackage{amssymb}
\usepackage{latexsym}
\usepackage{amsmath}
\usepackage{euscript}
\usepackage{graphics,color}
\usepackage[all]{xy}
\usepackage{cases}
%%%
\usepackage[margin=3cm]{geometry}
\usepackage{MnSymbol} % for \udots

  %%% What to use here (for gen. resolvents)?

\newcommand{\be}{\begin{equation}}
\newcommand{\ee}{\end{equation}}
\newcommand{\ba}{\begin{eqnarray}}
\newcommand{\ea}{\end{eqnarray}}
\newcommand{\baa}{\begin{eqnarray*}}
\newcommand{\eaa}{\end{eqnarray*}}
\newcommand{\bb}{}

\newcommand{\bi}[1]{\bibitem{#1}}
\newcommand{\lab}[1]{\label{#1}}
\newcommand{\re}[1]{(\ref{#1})}

%\renewcommand\thesection {\arabic{section}}
%\renewcommand\thesubsection {\thesection \arabic{subsection}.}
%\renewcommand\thesubsubsection {\thesubsection \arabic{subsubsection}.}

   % This is (2.10)

\newcounter{my}
\newcommand{\he}%
   {\stepcounter{equation}\setcounter{my}%
   {\value{equation}}\setcounter{equation}0%
   }%
\newcommand{\she}%
   {\setcounter{equation}{\value{my}}%
    }%

\renewcommand\t{\tilde}

\newcommand{\olsi}[1]{\,\overline{\!{#1}}} % overline short italic

\newtheorem{pr}{Proposition}

\newtheorem{theorem}{Theorem}[section]

\theoremstyle{definition}

\newtheorem{remark}[theorem]{Remark}

\numberwithin{equation}{section}

\begin{document}

\title[sieved Jacobi bispectrality]{Bispectrality of the sieved Jacobi polynomials}

\author{Luc Vinet}

\address{IVADO, Centre de Recherches Math\'ematiques and Département de physique, Universit\'e de Montr\'eal, P.O. Box 6128, Centre-ville Station,
Montr\'eal (Qu\'ebec), H3C 3J7, Canada}

\author{Alexei Zhedanov}

\address{School of Mathematics, Renmin University of China, Beijing 100872, China}

%\vspace*{5mm}

\begin{abstract}
It is shown that the CMV Laurent polynomials associated to the sieved Jacobi polynomials on the unit circle satisfy an eigenvalue equation with respect to a first order differential operator of Dunkl type. Using this result, the sieved Jacobi polynomials on the real line are found to be eigenfunctions of a Dunkl differential operator of second order. Eigenvalue equations for the sieved ultraspherical polynomials of the first and second kind are obtained as special cases. These results mean that the sieved Jacobi polynomials (either on the unit circle or on the real line) are bispectral.

\end{abstract}

\maketitle

\section{Introduction} \label{sect:1}
\setcounter{equation}{0}
The main goal of this paper is to construct an eigenvalue equation for the sieved Jacobi polynomials on the real line. We achieve this objective by working in the CMV framework and first obtaining the eigenvalue equation for the Laurent polynomials that are associated to the sieved Jacobi polynomials that are orthogonal on the unit circle. 

A key feature of the CMV formalism is that the recurrence relations of OPUC are presented in terms of ordinary eigenvalue problems for Laurent polynomials. In this respect the CMV approach to OPUC has similarities with the description of PRL. For the latter, the recurrence relation can be set up as an eigenvalue equation for some tridiagonal (Jacobi) matrix. In contrast, for OPUC the recurrence relation involves a five-diagonal matrix (with a specific Zig-Zag structure) which is unitary. This suggests calling certain OPUC CMV-bispectral, if there exists a dual eigenvalue equation involving an operator acting on the argument of the Laurent polynomials in addition to the five-term recurrence eigenvalue relation. In this way, bispectrality for OPUC  becomes similar to how it applies for PRL. 

The sieved ultraspherical polynomials were first introduced in the seminal paper \cite{AAA}. These polynomials appear as nontrivial limits of the q-ultraspherical polynomials \cite{KLS} when the parameter $q$ tends to roots of unity. Geronimo and Van Assche have shown \cite{GVA} that the generic sieved polynomials can be constructed from mappings of the regular (not sieved) orthogonal polynomials. There is also an approach to these same polynomials which is based on the sieved orthogonal polynomials on the unit circle (OPUC) \cite{Ismail_Li} that have simpler characterizations than those of the corresponding sieved orthogonal polynomials on the real line (PRL).   

Although many useful properties of the sieved polynomials have already been established, there remains a significant lacuna in this respect, namely the eigenvalue problem for these polynomials is missing. On the one hand, the q-ultraspherical polynomials \cite{KLS} are known to be eigenfunctions of a definite q-difference operator; on the other hand, an eigenvalue equation for the sieved ultraspherical polynomials is still unknown. These sieved polynomials were shown in \cite{BIW} to obey a second order differential equation but it should be noted that this equation cannot be presented in the form of a second order differential operator having the sieved ultraspherical polynomials as eigenfunctions. Nevertheless, one would expect the sieved ultraspherical polynomials to be solutions of an eigenvalue problem because they appear as limits of ``classical" polynomials that have this property. 

The main result of this paper is the identification of this eigenvalue equation. We first find the one for the Laurent polynomials associated to the sieved Jacobi OPUC, and then, using the Szeg\H{o} map from the unit circle to an interval, we obtain the corresponding eigenvalue equation for the sieved Jacobi polynomials on the real line. 

It turns out that the operators intervening in these eigenvalue equations are differential operators of Dunkl type. This means that they contain reflection operators with respect to the complex argument $z$: $f(z)  \to f\left(q z^{-1} \right)$, where $q$ is a root of unity. The appearance of such operators is rather natural. Indeed, given that the sieved ultraspherical polynomials arise as limits when $q$ goes to roots of unity in the $q$-ultraspherical ones, the presence of these roots in the operators defining the equations is not overly surprising. 

The paper is organized as follows. In Section \ref{sect:2}, we briefly recall the main properties of OPUC in general, and in Section \ref{sect:3}, of the special Jacobi OPUC family. In Section \ref{sect:4}, we consider the sieved Jacobi OPUC in the CMV formalism and present an eigenvalue equation for the associated Laurent polynomials. This will feature a Dunkl first order differential operator, denoted by $L(N)$, involving the reflections attached to the powers of a primitive $N$-th root of unity. The algebraic relations verified by this Dunkl operator in conjunction with the reflections and the generators of the dihedral rotations is the object of Section \ref{sect:5}. The eigenvalue equations for the sieved Jacobi PRL of the first and second kind are derived in Section \ref{sect:6} and feature operators $H(N)$ or $\tilde{H}(N)$ built from quadratic expression in $L(N)$. Operators that commutes with $H(N)$ or $\tilde{H}(N)$ are identified in Section \ref{sect:7} and shown to entail alternate eigenvalue equations. The eigenvalue problems for the generalized ultraspherical polynomials and the sieved ultraspherical polynomials of the first and second type are discussed as special cases in Section \ref{sect:8}. Concluding remarks will be found in Section \ref{sect:9}.

\section{OPUC, CMV and Szeg\H{o} map} \label{sect:2}
\setcounter{equation}{0}
We recall in this section basic facts about orthogonal polynomials on the unit circle and their maps to orthogonal polynomials on the real line.
The OPUC $\Phi_n(z)=z^n + O(z^{n-1})$ are monic polynomials defined by the recurrence relation 
\be
\Phi_{n+1}(z) = z \Phi_n(z) - \olsi a_n \Phi_n^*(z), \quad \Phi_0(z) =1 \lab{Sz_rec} \ee 
where
\be
\Phi_n^*(z) = z^n \olsi{\Phi}_n(z^{-1}).
\lab{Phi*} \ee
The Verblunsky parameters $a_n$ satisfy the condition
\be
|a_n| <1 . \lab{a<1} \ee
In what follows we shall only consider polynomials $\Phi_n(z)$  with real coefficients $a_n$. These $\Phi_n(z)$ are then orthogonal on the unit circle
\be
\int_{-\pi}^{\pi} \Phi_n(\exp(i \theta))  \Phi_m(\exp(-i \theta)) \rho(\theta) d \theta = h_n \delta_{nm}
\lab{ort_real} \ee
with $\rho$ a positive and symmetric weight function:
\be
\rho(-\theta) = \rho(\theta).
\lab{rho_sym} \ee
The normalization constants
\be
h_0 =1, \quad h_n = (1-a_0^2)(1-a_1^2) \dots (1-a_{n-1}^2) , \quad  n=1,2,\dots
\lab{h_n} \ee
are all positive  $h_n>0, \: n=0,1,\dots$ because of condition \re{a<1}.

It is useful to introduce the Laurent polynomials
\be
\psi_{2n}(z) = z^n \Phi_{2n}(1/z), \quad \psi_{2n+1}(z) = z^{-n} \Phi_{2n+1}(z)
\lab{psi_def} \ee
that satisfy similar orthogonality relation, namely
\be
\int_{-\pi}^{\pi} \psi_n(\exp(i \theta))  \psi_m(\exp(-i \theta)) \rho(\theta) d \theta = h_n \delta_{nm}.
\lab{ort_psi} \ee
The Laurent polynomials $\psi_n(z)$ prove appropriate to identify the sought after eigenvalue problems in the CMV-formalism \cite{Simon}.
Consider the vector 
\be
{\vec \psi(z)} = \{\psi_0(z), \psi_1(z), \psi_2(z) , \dots \}. 
\lab{vec_psi} \ee 
The following relation holds
\be
{\vec \psi(1/z)} = {\mathcal M}_1 {\vec \psi(z)}
\lab{RM1} \ee
with $\mathcal{M}_1$ the semi-infinite block-diagonal matrix
\be
\mathcal{M}_1 =
 \begin{pmatrix}
1 \\
& a_{1} & 1 &  &    \\
  &1-a_1^2 & -a_{1} &  &   \\
   & &  &               a_{3} & 1  \\
   & & &              1-a_3^2 & -a_{3}  \\
  &  & &   & &           a_{5} & 1  \\
   &  & &   & &         1-a_5^2 & - a_{5}  \\
&   &  & & & & & \ddots  \\
 \end{pmatrix}.
\lab{M1_def} \ee 
Similarly
\be
 {z \vec\psi(1/z)} = {\mathcal M}_2 {\vec \psi(z)}
\lab{RM2} \ee
with ${\mathcal M}_2$ the block-diagonal matrix
\be
 \mathcal{M}_2 =
 \begin{pmatrix}
  a_{0} & 1 &  &    \\
  1-a_0^2 & -a_{0} &  &   \\
   &  &              a_{2} & 1  \\
   &  &              1-a_2^2 & -a_{2}  \\
  &  &    & &          a_{4} & 1  \\
   &  &    & &         1-a_4^2 & - a_{4}  \\
&   &  & & & & \ddots  \\
 \end{pmatrix}.
\lab{M2_def} \ee
The matrices ${\mathcal M}_1$ and ${\mathcal M}_2$ are involution operators, i.e. they satisfy the relations
\be
{\mathcal M}_1^2 = {\mathcal M}_2^2 = {\mathcal I},
\lab{inv_MM} \ee
where ${\mathcal I}$ is the identity operator.
The vector ${\vec \psi}$ thus satisfies the generalized eigenvalue problem 
\be
\mathcal{M}_2 { \vec  \psi(z)} = z \mathcal{M}_1 { \vec  \psi(z)}.
\lab{GEVP_psi} \ee
From relations \re{RM1}, \re{RM2} and \re{inv_MM}, it is further seen that $\vec \psi(z)$ satisfies also the {\it ordinary} eigenvalue problem 
\be 
{\mathcal C} \vec \psi = z \vec \psi
\lab{CMV_rec} \ee
where $\mathcal C = \mathcal{M}_1 \mathcal{M}_2$ is a pentadiagonal (CMV) matrix \cite{Simon}.

Consider in addition the set of Laurent polynomials
\be
P_0 =1, \quad P_n = z^{1-n} \Phi_{2n-1}(z) + z^{n-1} \Phi_{2n-1}(1/z) = \psi_{2n-1}(z) + \psi_{2n-1}\left(1/z \right), \; n=1,2,\dots 
\lab{P_Phi} \ee 
These polynomials are symmetric with respect to the operation $z \to 1/z$. Hence, one can conclude that $P_n$ are monic polynomials of the argument $x(z) =z+1/z$:
\be
P_n = P_n(x(z)) = P_n(z+1/z) = (z+1/z)^n + O\left( \left(z+1/z \right)^{n-1} \right).
\lab{P_x} \ee
The first two polynomials are 
\be
P_0 =1, \; P_1(x(z)) = z+1/z -2 a_0.
\lab{P01} \ee
One can define the ``companion" polynomials $Q_n$ through the formula
\be
Q_n(z) = \frac{z^{-n} \Phi_{2n+1}(z) - z^{n} \Phi_{2n+1}(1/z)}{z-z^{-1}} = \frac{\psi_{2n+1}(z) - \psi_{2n+1}(1/z)}{z-z^{-1}}.
\lab{Q_Sz} \ee
Clearly, the polynomials $Q_n$ are again monic and of degree $n$ in the same  argument $x(z)$.
The polynomials $P_n$ and $Q_n$ were introduced by Szeg\H{o} \cite{Szego}. They are orthogonal on the interval $[-2,2]$
\be
\int_{-2}^{2} P_n(x) P_m(x) w(x) d x = 0, \; \mbox{if} \; m\ne n,
\lab{ort_P} \ee   
and 
\be
\int_{-2}^{2} Q_n(x) Q_m(x) w(x)(4-x^2) d x = 0, \; \mbox{if} \; m\ne n,
\lab{ort_Q} \ee 
where
\be
w(x) = \frac{\rho(\theta)}{\sqrt{4-x^2}}
\lab{w_rho} \ee
and $\rho(\theta)$ is understood to be a function of $x$. Indeed, in light of \re{rho_sym}, $\rho(\theta)$ is symmetric and hence depends only on $x = 2 \cos \theta$.
We shall also need the following useful expressions for the polynomials $P_n, Q_n$ \cite{VZ_JOPUC}
\ba
&&P_n(x(z)) = \psi_{2n}(z) +\left( 1+a_{2n-1}\right) \psi_{2n-1}(z), \nonumber \\
&&\left(z-z^{-1}\right)Q_{n-1}(x(z)) = -\psi_{2n}(z) +\left( 1- a_{2n-1}\right) \psi_{2n-1}(z).
\lab{PQ_psi} \ea

\section{Jacobi OPUC and their eigenvalue problem} \label{sect:3}
\setcounter{equation}{0}
The Jacobi OPUC depend on two real parameters $\alpha,\beta$ and are determined by the following Verblunsky parameters
\be
a_n = - \frac{\alpha + 1/2+ (-1)^{n+1} \left(  \beta +1/2 \right) }{n+\alpha+\beta+2}, \quad n=0,1,2,\dots
\lab{an_sol} \ee
These OPUC are orthogonal on the unit circle
\be
\int_{0}^{2\pi} \rho(\theta) \Phi_n\left(e^{i \theta}\right) \Phi_m \left(e^{-i \theta}\right) d \theta = 0, \; n \ne m,
\lab{ort_JOPUC} \ee
with respect to the positive weight function
\be
\rho(\theta) = \left( 1-\cos \theta \right)^{\alpha+1/2}  \left( 1+ \cos \theta \right)^{\beta+1/2}.
\lab{w_JOPUC} \ee
We have shown in \cite{VZ_JOPUC} that these polynomials are CMV bispectral in that the associated Laurent polynomials $\psi_n(z)$ satisfy the eigenvalue equation
\be
\mathcal{K} \psi_n(z) = \mu_n \psi_n(z)
\lab{eigen_JOPUC} \ee
where $\mathcal{K}$ is the following first order differential operator of Dunkl type
\be
\mathcal{K}= z \partial_z  + G(z) \left(R-\mathcal{I} \right)
\lab{KJop} \ee
with
\be
G(z) = \frac{z\left((\alpha+\beta+1)z +\alpha-\beta \right)}{1-z^2}
\lab{G} \ee
and where $R$ denotes the reflection operator
\be
Rf(z) = f(1/z).
\lab{R_def} \ee
The eigenvalues are
\be
\mu_n = \left\{ -n/2,  \; n \; \mbox{even} \atop  (n+1)/2+\alpha+\beta+1 , \; n \; \mbox{odd} . \right.
\lab{l_JOPUC} \ee
Moreover, as pointed out in \cite{VZ_JOPUC}, the operators $\mathcal{K}$, $\mathcal{M}_1$ and $\mathcal{M}_2$ realize an algebra that has been called the circle Jacobi algebra and whose defining relations are: 
\ba
&&\{\mathcal{K}, M_1\} = (\alpha+\beta+1) \left( M_1-\mathcal{I}\right),  \nonumber \\
&&\{\mathcal {K}, M_2\} = (2+\alpha+\beta) M_2 +(\alpha-\beta)\mathcal{I}.
\lab{KM12} \ea
It plays the role of a ``hidden" symmetry algebra for this family of orthogonal polynomials as the fundamental properties of the Jacobi OPUC (Verblunsky parameters, eigenvalue operator $\mathcal{K}$, etc) can be obtained from its representations.

\section{Sieved Jacobi OPUC} \label{sect:4}
\setcounter{equation}{0}
This section focuses on the sieved Jacobi OPUC that we present next and whose eigenvalue equation we proceed to identify.

Let $a_n, n=0,1, \dots$ be the set of Verblunsky parameters associated to the Jacobi OPUC $\Phi_n(z)$. Let
$N$ be a fixed natural number $N = 1,2,\dots$  The sieved Jacobi OPUC $\Phi_n(z;N)$ are determined by the
Verblunsky parameters $a_n(N)$ that are given in terms of the parameters $a_n$ through the relations \cite{Ismail_Li}:
\be
a_n(N) = \left\{ a_{k-1}  \quad \mbox{if} \; n=Nk-1  \atop  0 \quad \quad \mbox{oterwise} .\right.
\lab{a_n(N)} \ee
It has been shown in \cite{Ismail_Li} that
\be
\Phi_n(z;N) = z^j \Phi_k(z^N), \quad \mbox{where} \quad n=Nk+j, \quad j=0,1,\dots, N-1.
\lab{Phi(N)} \ee 
Notice that the ``non-sieved" Jacobi OPUC $\Phi_n(z)$ correspond to the case $N=1$:
\be
\Phi_n(z) = \Phi_n(z;1).
\lab{Phi_nons} \ee
We shall make use of the corresponding Laurent CMV-polynomials 
\be
\psi_{2n}(z;N) = z^n \Phi_{2n}\left(z^{-1};N \right), \;  \psi_{2n+1}(z;N) = z^{-n} \Phi_{2n+1}\left(z;N \right).
\lab{sieved_psi} \ee
One could expect relations between $\psi_n(z;N)$ and $\psi_n(z) =\psi_n(z;1)$ that are similar to formula  \re{Phi(N)}. They indeed exist, but depend on the parity of the numbers $N,n,j$.\\
If $n$ is {\it even}, one has 
\be
\psi_n(z;N) = \left\{  z^{-j/2} \psi_k \left(z^N \right),  \;  j \; \mbox{even}   \atop   z^{(j+1)/2} \psi_k \left(z^{-N} \right), \;  j \; \mbox{odd}.\right.
\lab{psi-psi_n_even} \ee
If $n$ is {\it odd} and $N$ is {\it even}, one finds
\be
\psi_n(z;N) = \left\{  z^{(N-j)/2} \psi_k \left(z^{-N} \right),  \;  j \; \mbox{even}   \atop   z^{(-N+j+1)/2} \psi_k \left(z^{N} \right), \;  j \; \mbox{odd}.\right.
\lab{psi-psi_n_odd_N_even} \ee
Finally, if $n$ is {\it odd} and $N$ is {\it odd}, one gets
\be
\psi_n(z;N) = \left\{  z^{(-N+j+1)/2} \psi_k \left(z^{N} \right),  \;  j \; \mbox{even}   \atop   z^{(N-j)/2} \psi_k \left(z^{-N} \right), \;  j \; \mbox{odd}.\right. ,
\lab{psi-psi_n_odd_N_odd} 
\ee
where
\begin{equation}
    n=Nk+j, \quad j=0,1,\dots, N-1.
\end{equation}

We already know that the operators $R$ and $zR$ act in block-diagonal fashion in the basis  $\psi_n(z)$. The same property is true in the case of the sieved OPUC for the reflection operators $R_j$ and $zR_j$, with
\be
R_j f(z)= f\left( q^j/z\right), \; j=0,1,2,\dots, N-1,
\lab{R_i_def} \ee
and where $q$ is a primitive $N$-th root of unity. In particular, one can take
\be
q=\exp\left(\frac{2 \pi i}{N} \right).
\lab{q_def} \ee
To be precise, using the vector notation of \eqref{vec_psi}, one has
\be
\vec {\psi} \left(q^j/z;N \right) =\mathcal{M}_{1;j} \vec{\psi}(z;N), \quad z \vec{\psi} \left(q^j/z;N \right) =\mathcal{M}_{2;j} \vec{\psi}(z;N), \quad j=0,1,2,\dots, N-1,
\lab{M_i} \ee 
 where  $\mathcal{M}_{1;j}$ and $\mathcal{M}_{2;j}$ are block-diagonal matrices having structures similar respectively to $\mathcal{M}_1$ and $\mathcal{M}_2$ (see \re{M1_def} and \re{M2_def}). That \eqref{M_i} holds is seen as follows. When $j=0$, these relations amount to formulas \re{RM1}-\re{M2_def} which are true for any OPUC. The validity of \eqref{M_i} for $j=1,2, \dots, N-1$, i.e. in the presence of the factors $q^j$, is then confirmed like this. Since $q^{-jN}=1$, one observes from the identities \eqref{psi-psi_n_even}-\eqref{psi-psi_n_odd_N_odd} that
\be
\psi_n(q^{-j}z ;N) = \omega_j(N) \psi_n(z;N),  \quad |\omega_j(N)|=1
\lab{Phi_omega} \ee 
where $\omega_j(N)$ is a phase that depends on $N$ and $j$ according to their parities. Hence upon the replacement  $z \to q^{-j}z$ in the equalities \re{M_i} for $j=0$, we see that these formulas preserve their forms up to rescalings of the entries of $\mathcal{M}_{1;0}$ and $\mathcal{M}_{2;0}$ that yield the matrices $\mathcal{M}_{1;j}$ and $\mathcal{M}_{2;j}$. These $\mathcal{M}_{1;j}$ and $\mathcal{M}_{2;j}$ are moreover reflection matrices as they express the actions of the reflection operators $R_j$.

The ``sieved" polynomials $P_n(z;N)\:$ and $ \: Q_n(z;N)$ that are orthogonal  on the real line will be considered attentively in section 6. We record here their definition in terms of the Laurent CMV-polynomials $\psi_{2n}(z;N)$ and $\psi_{2n+1}(z;N)$:
\ba
&&P_n(x\left(z);N \right) = \psi_{2n}(z;N) + \left(1+a_{2n-1}(N)\right) \psi_{2n-1}(z;N) \\
&& \left(z-z^{-1} \right)Q_{n-1}(x\left(z);N \right) = -\psi_{2n}(z;N) + \left(1-a_{2n-1}(N)\right) \psi_{2n-1}(z;N).
\lab{PQ_sieved} \ea 
The sieved Jacobi OPUC  $\Phi_n(z;N)$ and the corresponding PRL $P_n (x(z);N)$ and $Q_n (x(z);N)$ have been independently  presented by Askey \cite{Askey} and  Badkov \cite{Badkov}. See also \cite{Charris} for a detailed analysis of the properties of these polynomials. The polynomials $P_n (x(z);N)$ and $Q_n (x(z);N)$ are respectively called the sieved
Jacobi polynomials of the first and second kind (see, e.g. \cite{Ismail_Li}). Like the Jacobi PRL, in view of \eqref{an_sol}, \eqref{a_n(N)}, \eqref{sieved_psi} and \eqref{PQ_sieved}, the polynomials  $P_n (x(z);N)$ and $Q_n (x(z);N)$ depend on the two parameters $\alpha$ and $\beta$. An important special case arises when these parameters are taken to be equal, that is when $\alpha = \beta$; this leads to the sieved ultraspherical polynomials on the real line that were first introduced in \cite{AAA}. See Section 7.

To the best of our knowledge, the bispectrality of the sieved Jacobi polynomials has not yet been established for either their OPUC or PRL versions. One expects that these OPs satisfy an eigenvalue equation with respect to an operator acting on the argument $z$. This belief is based on the fact that the sieved Jacobi polynomials can be obtained from a family of the Askey polynomial scheme through a non-trivial limit as $q$ goes to a root of unity. Since the Askey-Wilson polynomials are bispectral, one would assume the same to be true for the sieved Jacobi OPUC in particular. However, taking the straightforward limit as $q$ goes to a root of unity of the $q$-difference equation of (special) Askey-Wilson polynomials does not give the desired result. We here propose an alternate approach to show that the sieved Jacobi OPUC are bispectral. It will rely on the observation made recently \cite{VZ_JOPUC} that the ordinary Jacobi OPUC (the case $N=1$), are CMV-bispectral with their associated Laurent polynomials being eigenfunctions of a differential operator of Dunkl type that acts on the variable $z$. However, in the sieved case, one will find not surprisingly that the Dunkl operator will not only involve the reflection operator $R$, but also {\it the cyclic reflections}  $R_i$ introduced in \re{R_i_def}. This is in line with the fact that all the operators $R_i$ were seen to intervene in the construction of the recurrence relations of CMV type: $\mathcal{M}_{1;j}\mathcal{M}_{2;j}\vec{\psi}(z;N)=z\vec{\psi}(z;N)$, for $j=0,\dots, N-1$. The existence of this set of equations is a specific property of the sieved OPUC that is of course absent for the ordinary OPUC.
We are now ready to state one of our main results:
\begin{pr}
The CMV-Laurent polynomials $\psi_n(z;N)$ associated to the sieved Jacobi OPUC $\Phi_n(z;N)$ satisfy the eigenvalue equation
\be
L(N) \psi_n(z;N) = \lambda_n(N) \psi_n(z;N), 
\lab{eig_SOPUC} \ee
where the operator $L(N)$ is
\be
L(N) = z\partial_z + \sum_{k=0}^{N-1} A_k(z;N) \left( R_k - \mathcal{I}\right) 
\lab{L(N)} \ee
with
\be
A_k(z;N)= \sigma_k \frac{z^2}{q^k- z^2}, \; \sigma_k = \alpha+\beta+1  +(-1)^k (\alpha-\beta)
\lab{A_ev} \ee
for even $N$ and
\be
A_k(z;N) = \frac{(\alpha+\beta+1)z^2 + \rho_k(\alpha-\beta)z}{q^k- z^2}, \; \rho_k = \left\{  q^{k/2}, \; k \: \mbox{even}  \atop q^{(k-N)/2},  \; k \: \mbox{odd} \right. 
\lab{A_odd} \ee
for $N$ odd.
\\

\noindent The eigenvalues are
\be
\lambda_n(N) = \left\{ -n/2, \; n \: \mbox{even}  \atop (n+1)/2 + (\alpha+\beta+1)N, \; n \: \mbox{odd}.   \right.
\lab{lambda(N)} \ee
\end{pr}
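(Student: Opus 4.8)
\emph{Proof plan.} The plan is to reduce the eigenvalue equation \eqref{eig_SOPUC} to the already-established non-sieved case $N=1$, namely \eqref{eigen_JOPUC}--\eqref{KJop}, by transporting the operator $\mathcal K$ along the covering map $z\mapsto z^{\pm N}$. The relations \eqref{psi-psi_n_even}--\eqref{psi-psi_n_odd_N_odd} express every sieved Laurent polynomial in the form $\psi_n(z;N)=z^{s}\,\psi_k\!\left(z^{\epsilon N}\right)$, where the twist exponent $s$, the sign $\epsilon=\pm1$, and the non-sieved index $k$ are read off from the parities of $n$, $N$ and $j$ in $n=Nk+j$. I would first tabulate, parity sector by parity sector, the triple $(s,\epsilon,k)$ attached to each $n$.

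Next I would compute $z\partial_z$ on $z^{s}\psi_k(z^{\epsilon N})$. Writing $w=z^{\epsilon N}$, one has $z\partial_z z^{s}=s z^{s}$ and $z\partial_z\psi_k(z^{\epsilon N})=\epsilon N\,(w\partial_w\psi_k)(w)$, so the Euler part gives $s\,\psi_n+\epsilon N z^{s}(w\partial_w\psi_k)(w)$. Now I invoke \eqref{eigen_JOPUC} in the form $w\partial_w\psi_k=\mu_k\psi_k-G(w)\bigl(\psi_k(1/w)-\psi_k(w)\bigr)$, with $G$ as in \eqref{G} and $\mu_k$ as in \eqref{l_JOPUC}. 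This yields a diagonal piece proportional to $\psi_k(z^{\epsilon N})$ and a reflected piece proportional to $\psi_k(z^{-\epsilon N})$. For the reflection sum in \eqref{L(N)} I would use that $q$ is a primitive $N$-th root of unity, so $q^{\pm kN}=1$ and every cyclic reflection \eqref{R_i_def} collapses onto the \emph{same} reflected polynomial: $R_m\psi_n(z;N)=q^{ms}z^{-s}\psi_k(z^{-\epsilon N})$ for all $m$. Hence $\sum_k A_k(R_k-\mathcal I)$ contributes $\bigl(\sum_k A_k q^{ks}\bigr)z^{-s}\psi_k(z^{-\epsilon N})-\bigl(\sum_k A_k\bigr)z^{s}\psi_k(z^{\epsilon N})$.

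Collecting terms, $L(N)\psi_n$ becomes a combination of $\psi_k(z^{\epsilon N})$ and $\psi_k(z^{-\epsilon N})$, and \eqref{eig_SOPUC} splits into two requirements: (i) the reflected coefficient must vanish, $\sum_{k=0}^{N-1}A_k(z;N)\,q^{ks}=\epsilon N z^{2s}G(z^{\epsilon N})$; and (ii) the diagonal coefficient must collapse to the constant $\lambda_n(N)$, which forces $\sum_{k=0}^{N-1}A_k(z;N)=NG(z^N)$, independently of $n$. Both are partial-fraction identities, provable from the single residue formula $\sum_{k=0}^{N-1}\frac{q^{km}}{q^k-u}=\frac{Nu^{\,m-1}}{1-u^{N}}$ with $u=z^2$: this is precisely what fixes the shape of $A_k$ in \eqref{A_ev}--\eqref{A_odd}, since the block $\frac{z^2}{q^k-z^2}$ is the partial-fraction building block of $G(z^{\pm N})$, whose denominator is $1-z^{\pm2N}=\pm\prod_k(q^k-z^2)$, and the weights $\sigma_k,\rho_k$ (hence the even/odd-$N$ split) are exactly the numerators this decomposition demands.

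From (ii) and the elementary relation $G(w)+G(1/w)=-(\alpha+\beta+1)$ one gets $\sum_k A_k=NG(z^N)=\epsilon N G(z^{\epsilon N})+c_\epsilon$ with $c_{+1}=0$ and $c_{-1}=-N(\alpha+\beta+1)$. The diagonal coefficient then reduces to the constant $\lambda_n(N)=s+\epsilon N\mu_k-c_\epsilon$, and a short bookkeeping across the parity sectors reproduces the piecewise formula \eqref{lambda(N)} (the shift $N(\alpha+\beta+1)$ for odd $n$ being carried entirely by $c_{-1}$). I expect the main obstacle to be the case analysis itself: pinning down the triples $(s,\epsilon,k)$ correctly in each parity sector, and confirming that the single fixed operator $L(N)$ satisfies identity (i) \emph{simultaneously} for all $(s,\epsilon)$ that occur — that is, that the phase weights $q^{ks}$ conspire uniformly with the partial-fraction coefficients — while the constant $c_\epsilon$ absorbs precisely the eigenvalue discrepancy between the even and odd sectors.
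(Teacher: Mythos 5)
Your plan is correct and follows essentially the same route as the paper's proof: write $\psi_n(z;N)=z^{s}\psi_k(z^{\pm N})$ via \eqref{psi-psi_n_even}--\eqref{psi-psi_n_odd_N_odd}, eliminate $w\psi_k'(w)$ using the $N=1$ Dunkl equation \eqref{eigen_JOPUC}, collapse all cyclic reflections onto the single reflected polynomial $\psi_k(z^{\mp N})$, and verify the resulting coefficient identities with the root-of-unity residue sum $\sum_{l}q^{l(h+1)}/(q^l-z)=Nz^h/(1-z^N)$. Your conditions (i)--(ii) together with $G(w)+G(1/w)=-(\alpha+\beta+1)$ are just a tidier packaging of the paper's verification that the two functions $F^{(1)}_n$ and $F^{(2)}_n$ vanish identically, with the same parity-by-parity case analysis remaining as the bulk of the work.
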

\begin{proof}
    
This proposition is rooted in the eigenvalue equation \re{eigen_JOPUC} that the Jacobi OPUC obey. The proof proceeds through a direct verification of \eqref{eig_SOPUC} as defined by \eqref{L(N)}-\eqref{lambda(N)}. In broad strokes, it goes as follows. One uses the relations between the sieved CMV Laurent polynomials $\psi_n(z;N)$ and the ordinary ones $\psi_k(z)$ provided by equations \eqref{psi-psi_n_even}-\eqref{psi-psi_n_odd_N_odd} to write the former in terms of the latter in the following fashion
\be
\psi_n(z;N) = z^{\nu} \psi_k\left(z^N \right) \quad \mbox{or} \quad  \psi_n(z;N) = z^{\nu} \psi_k\left(z^{-N} \right),
\lab{psi_nu} \ee
where $\nu$ is an integer that depends on $n$, $N$ and $j$ with $n=Nk+j$. 
With $\psi_k'(z)$ the derivative of $\psi_k(z)$, we have from $\mathcal{K} \psi_k(z) = \mu_k \psi_k(z)$, \re{eigen_JOPUC}, that
\be
z^N \psi_k'\left( z^N\right) =   \left( G\left( z^N \right) + \mu_k \right)  \psi_k\left( z^N\right)  - G\left( z^N\right) \psi_k\left( z^{-N}\right)
\lab{der_psiN} \ee
and a similar formula with $z^N$ replaced with $z^{-N}$. Substituting $\psi_n(z;N)$ as given by its expression of the type \eqref{psi_nu} into \re{eig_SOPUC} and 
replacing the derivative $\psi_k'\left( z^N\right)$ by the relation \eqref{der_psiN} or as the case maybe, replacing $\psi_k'\left( z^{-N} \right)$ using the the relation with $z^N \rightarrow z^{-N}$, one will see that \eqref{eig_SOPUC} is transformed into an equation of the form
\be
F^{(1)}_n(z) \psi_n\left( z^N\right) + F^{(2)}_n(z) \psi_n\left( z^{-N}\right) =0
\lab{FF} \ee
with the functions $F^{(1)}_n(z)$  and $F^{(2)}_n(z)$ containing $A(z^N), \mu_k$ and $\lambda_n(N)$. This is what needs to be proved at that point to confirm that Proposition 1 is true. As it turns out, it is possible to show that these functions $F^{(1)}_n(z)$  and $F^{(2)}_n(z)$ are identically zero separately to complete the proof.

As the reader will have appreciated, there are many cases to check in that way however. First $L(N)$ takes a different form depending on the parity of $N$. Second, the relations between the sieved CMV Laurent polynomials and the non-sieved ones split in numerous cases as per \eqref{psi-psi_n_even}-\eqref{psi-psi_n_odd_N_odd}. While the results have been validated in all these situations, it would be tedious to go through all of them here. However, for the sake of giving some details of the proof, we shall provide some indications on the computations in only one case namely the one with $n$, $N$, (and $j$ then) all even. With these conditions on the parameters, from 
\eqref{psi-psi_n_even} we have
\begin{equation}
    \psi_n(z;N)=z^{-\frac{j}{2}}\psi_k(z^N), \quad j=0, 2,\dots, N-2. \label{psi_special}
\end{equation}
From the Dunkl eigenvalue equation \eqref{eigen_JOPUC}-\eqref{l_JOPUC} for the CMV Laurent polynomials $\psi_k(z)$ we have:
\begin{equation}
    z^N\psi_k'(z^N) =- \big{[}\frac{(\alpha+\beta+1)z^{2N}+(\alpha-\beta)z^N}{1-z^{2N}}\big{]}\big{(}\psi_k(z^{-N})-\psi_k(z^N)\big{)}  -\frac{k}{2}\psi_k(z^N).\label{psiprime}
\end{equation}
Inserting \eqref{psi_special} in the proposed eigenvalue equation \eqref{eig_SOPUC} for the sieved Jacobi CMV OPUC, we have:
\begin{equation}
    (\frac{n}{2} - \frac{j}{2})z^{-\frac{j}{2}} \psi(Z^N) + Nz^{-\frac{j}{2}} Z^N\psi_k'(Z^N) + \sum_{l=0}^{N-1} A_l(z;N)(R_l-I)\big{(}z^{-\frac{j}{2}}\psi_k(z^N)\big{)}=0 \label{eig_int}
\end{equation}
with 
\begin{equation}
    A_l(z;N) = \big{[}(\alpha+\beta+1)+(-1^l)(\alpha-\beta)\big{]}, \quad l=0,\dots, N-1,
\end{equation}
for the particular case we are considering. Now
\begin{equation}
    (R_l-I) \big{(}z^{-\frac{j}{2}}\psi_k(z^N)\big{)}= q^{-\frac{lj}{2}}z^{\frac{j}{2}}\psi_k(z^{-N}) - z^{-\frac{j}{2}}\psi_k(z^N).
\end{equation}
We observe that sums of the type $\sum_{l=0}^{N-1} \frac{q^{lh}}{q^l-z}$ occur in the last term of \eqref{eig_int}. Let us focus on these for a moment and consider to that end the contour integral 
\begin{equation}
    \oint_{|w|>1} dw \frac{w^h}{(w^N-1)(w-z)}, \qquad h\in \mathbb{N},
\end{equation}
over a circle of radius larger than $1$ in the complex plane. Assume $h$ is a non-negative integer smaller than $N$. The integrand has poles on the unit circle at the roots of unity $w_l=q^l$ with $q=e^{\frac{2i\pi}{N}}$ and at $w=z$. Since it is analytic outside the unit circle, by Cauchy's theorem, the sum of the residues will be zero. Keeping in mind that 
\begin{equation}
    \lim_{w\rightarrow q^l}\;\frac{(w-q^l)}{\Pi_{k=0}^{N-1}(w-q^k)}=\frac{N}{q^l}, 
\end{equation}
we thus have:
\begin{equation}
    \sum_{l=0}^{N-1}\frac{q^{l(h+1)}}{q^l-z}=N\frac{z^h}{1-z^N}. \label{sumgen}
    \end{equation}
Applying this formula while recalling that $h$ must be positive, that $q^N=1$ and respecting the range of $j$, we get for the needed sums:
 \begin{align}
   &z^2\sum_{l=0}^{N-1} \frac{q^{-\frac{lj}{2}}}{q^l-z^2} = N\frac{z^{2N-j}}{1-z^{2N}},  \label{sum1}   \\
     & z^2\sum_{l=0}^{N-1} \frac{(-1)^l q^{-\frac{lj}{2}}}{q^l-z^2} 
      =z^2\sum_{l=0}^{N-1} \frac{q^{(\frac{N}{2}-\frac{j}{2})l}}{q^l-z^2} 
    = N\frac{z^{N-j}}{1-z^{2N}}.  \label{sum2}
\end{align}

Putting all the pieces together in \eqref{eig_int}, i.e. replacing $z^N\psi_k'(z^N)$ by the r.h.s. of \eqref{psiprime} and using the sums \eqref{sum1} and \eqref{sum2}, one readily sees that the factors of $\psi_k(z^N)$ and of $\psi_k(z^{-N})$ vanish if one recalls that that $n=Nk+j$. All the other possible situations for $n$, $N$ and $j$ can be treated analogously to confirm that Proposition 1 holds.
\end{proof}

The operator $L(N)$ can be presented in the equivalent form using again the sums \eqref{sumgen}:
\be
L(N) = z\partial_z + \sum_{k=0}^{N-1} A_k(z;N)R_k +B(z) \mathcal{I},
\lab{L(N)_B} \ee
where 
\be
B(z) = -\sum_{k=0}^{N-1}A_k(z;N) = N \frac{(\alpha+\beta+1)z^{2N} + (\alpha-\beta)z^N}{z^{2N}-1}.
\lab{B_expl} \ee
Note that in contrast to the expressions of the coefficients $A_k(z;N)$, the one of $B(z)$ does not depend on the parity of $N$.
We also have the following result regarding the self-adjointness of $L(N)$:
\begin{pr} \lab{Prop2}
The operator $L(N)$ is self adjoint on the unit circle with respect to the scalar product
\be
(f(z),g(z)) = \int_0^{2 \pi} f\left( e^{i \theta} \right) \bar g \left(e^{-i\theta}  \right) \rho(\theta;N) d \theta,
\lab{scal_N} \ee
where 
\be
\rho(\theta;N) = \left( 1-\cos N\theta \right)^{\alpha+1/2}  \left( 1+ \cos N\theta \right)^{\beta+1/2}. 
\lab{rho_N} \ee
In other words, this amounts to the following operator relation
\be
L(N)^{\dagger} \rho(\theta;N) = \rho(\theta;N) L(N),
\lab{self_L} \ee
with $L(N)^{\dagger}$ the adjoint of $L(N)$ and where it is assumed that $z=e^{i \theta}$. 
\end{pr}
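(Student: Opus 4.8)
The plan is to verify the operator identity \eqref{self_L} directly, since self-adjointness of $L(N)$ with respect to the weighted product \eqref{scal_N} is equivalent to the relation $L(N)^\dagger \rho = \rho L(N)$, where $L(N)^\dagger$ now denotes the formal adjoint computed with respect to the flat measure $d\theta$ on the circle (the weight $\rho$ being treated as a multiplication operator). First I would record the adjoints of the three building blocks in the form \eqref{L(N)_B}. Writing $z=e^{i\theta}$, the Euler operator becomes $z\partial_z=-i\partial_\theta$, which is formally self-adjoint for $d\theta$ by integration by parts, the boundary contribution $[\,\cdot\,]_0^{2\pi}$ vanishing by periodicity. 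Each reflection $R_k$ acts on the circle as the involution $\theta\mapsto 2\pi k/N-\theta$; this map preserves $|d\theta|$, so a change of variables gives $\langle R_k f,g\rangle_{d\theta}=\langle f,R_k g\rangle_{d\theta}$, i.e. $R_k$ is self-adjoint as well, while a multiplication operator by $A_k$ has adjoint equal to multiplication by the complex conjugate $\overline{A_k}$ on $|z|=1$. Hence $L(N)^\dagger = z\partial_z+\sum_{k}R_k\,\overline{A_k}+\overline{B}$.

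The crucial preliminary observation is that the weight \eqref{rho_N} is invariant under every $R_k$, because $\cos N(2\pi k/N-\theta)=\cos N\theta$. I would then commute the multiplication by $\rho$ to the right through $L(N)^\dagger$. For the reflection terms this uses the $R_k$-invariance of $\rho$ together with the pointwise identity
\[
\overline{A_k(q^k/z)}=A_k(z),\qquad |z|=1,
\]
which I would check from \eqref{A_ev} and \eqref{A_odd}; for odd $N$ the verification rests on $\rho_k^2=q^k$, so that $\overline{\rho_k}=\rho_k q^{-k}$. Granting this identity, each term $R_k\overline{A_k}\rho$ equals $\rho\,A_k R_k$, so the reflection parts of $L(N)^\dagger\rho$ and of $\rho\,L(N)$ coincide, as do the pure first-order pieces $\rho\,z\partial_z$.

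What then remains is a scalar (multiplication-operator) identity. Applying the Leibniz rule $z\partial_z(\rho h)=(z\partial_z\rho)h+\rho(z\partial_z h)$ and collecting it with the surviving constant term $\overline{B}\rho$, the whole relation \eqref{self_L} reduces to
\[
z\partial_z\log\rho(\theta;N)=B(z)-\overline{B(z)},\qquad z=e^{i\theta},
\]
which I regard as the main computation. I would verify it by setting $u=z^N=e^{iN\theta}$ in \eqref{B_expl}, giving $B-\overline{B}=N\big[(\alpha+\beta+1)(u^2+1)+2(\alpha-\beta)u\big]/(u^2-1)$, and by differentiating $\log\rho(\theta;N)$ from \eqref{rho_N}; after expressing $\sin N\theta$ and $\cos N\theta$ through $u$ one lands on the same rational function of $u$, completing the proof. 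The main obstacle is bookkeeping rather than conceptual: the coefficients $A_k$ and $B$ are genuinely complex on the unit circle, so both the matching of the reflection terms and the final scalar identity must be carried out with care about the conjugations and the reflected arguments $q^k/z$. I would also note the standing assumption that $\alpha,\beta$ lie in a range making $\rho(\theta;N)$ integrable, which guarantees that the formal adjoint computed above represents the genuine adjoint and that the boundary terms are indeed harmless.
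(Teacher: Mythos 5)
Your proposal is correct and follows essentially the same route as the paper: compute the formal adjoint with respect to $d\theta$, use the $R_k$-invariance of $\rho(\theta;N)$ together with the identity $R_k A_k(z)^\ast = A_k(z) R_k$ (your $\overline{A_k(q^k/z)}=A_k(z)$, with the $\rho_k^2=q^k$ observation for odd $N$) to match the reflection terms, and reduce everything to the scalar identity $-i\partial_\theta\ln\rho(\theta;N)=B(z)-B^*(z)$, verified by direct computation.
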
 
\begin{proof}
  Since $z\partial_z=-i\partial_\theta$, given \eqref{L(N)_B}, the adjoint of $L(N)$ reads:
  \begin{equation}
    L(N)^{\dagger}=-i\partial_\theta + \sum_{k=0}^{N-1} A_k(z,N)R_k+B^*(z) \label{Ldagger}
  \end{equation}
  taking into account that $R_k A_k(z)^*=A_k(z)R_k$ as is directly seen from the definition \eqref{R_i_def}, \eqref{q_def} of $R_k$ and the expressions \eqref{A_ev} and \eqref{A_odd} of $A_k(z)$. Now note that $\rho(\theta;N)=\rho(N\theta)$ with $\rho(\theta)$ the weight \eqref{w_JOPUC} of the Jacobi OPUC and remark that the action of $R_k$ translates as follows on functions of $\theta$:
  \begin{equation}
      R_kf(\theta)=f(-\theta+\frac{2\pi k}{N}).
  \end{equation}
  To prove \eqref{self_L}, we first observe that $R_k\rho(\theta;N)=\rho(\theta;N)R_k$, namely that $\rho(\theta;N)$ is invariant under the inversions $R_k$. Indeed, using the periodicity of $\rho(\theta)$ and its reflection invariance \eqref{rho_sym}, we have:
  \begin{equation}
     R_k\rho(\theta;N)=\rho(-N\theta+2\pi k)R_k=\rho(-N\theta)R_k=\rho(N\theta)R_k= \rho(\theta;N)R_k.
  \end{equation}
Therefore showing that $L(N)^{\dagger} \rho(\theta;N) = \rho(\theta;N) L(N)$ reduces to checking that 
\begin{equation}
    -i\partial_\theta \;\ln \left(\rho(\theta;N)\right)=B(z)-B^*(z).
\end{equation}
One has,
\begin{equation}
    -i\partial_\theta \;\ln \left(\rho(\theta;N)\right) = -\frac{iN}{\sin N\theta}[(\alpha+\beta+1) \cos N\theta + (\alpha-\beta)]. \label{partial_ln}
\end{equation}
Writing \eqref{B_expl} in the form:
\begin{equation}
    B(z)=\frac{N}{z^N-\bar{z}^N}[(\alpha+\beta+1)z^N + (\alpha-\beta)], \quad \text{with} \quad \bar{z}=z^*=z^{-1},
\end{equation}
given that $\alpha$ and $\beta$ are real parameters, we readily find that
\begin{equation}
    B(z)-B^*(z)=\frac{N}{z^N-\bar{z}^N}[(\alpha+\beta+1)(z^N+\bar{z}^N)+ 2(\alpha-\beta)]
\end{equation}
which coincides when $z=e^{i\theta}$ with the expression \eqref{partial_ln} found for $-i\partial_\theta \;\ln \left(\rho(\theta;N) \right)$.
\end{proof}

For the ultraspherical case, i.e. when $\beta=\alpha$ the expressions for $A_k(z;N)$ and $B(z)$ become much simpler:
\be
A_k(z;N) = \frac{(2 \alpha+1)z^2}{q^k-z^2}
\lab{A_ultra} \ee
and 
\be
B(z) = N \frac{(2\alpha+1)z^{2N} }{z^{2N}-1}.
\lab{B_ultra} \ee

\section{Algebraic relations} \label{sect:5}
\setcounter{equation}{0}
When $N=1$ the sieved Jacobi OPUC become the ordinary Jacobi OPUC. There are three basic operators in this case: two reflections $R$ and $zR$ and the operator $L=L(1)$ of which the Jacobi OPUC are eigenfunctions. It was demonstrated in \cite{VZ_JOPUC} that these operators realize the defining  relations of the {\it circle Jacobi algebra} \re{KM12}. It was moreover shown in the same paper that the Verblunsky parameters $a_n$ of the Jacobi OPUC can be derived solely from the representations of this circle Jacobi algebra.

In the case of the sieved Jacobi OPUC with arbitrary $N$, we have as many reflection operators: $R_0,R_1, \dots, R_{N-1}$.
These operators and their products form the dihedral group $D_N$ which has $2N$ elements consisting of the $N$ reflections $R_0, \dots, R_{N-1}$ and the $N$ rotations $T_0, T_1, \dots, T_{N-1}$ defined as
\be
T_k f(z) = f\left( q^k z \right). \label{rot}
\ee 
The rotations arise through the multiplication of two reflections
\be
R_k R_j = T_{j-k}.
\lab{RRT} \ee
The group products are:
\be 
T_k T_j = T_j T_k = T_{k+j}, \quad T_k R_j = R_{j-k}, \quad R_j T_k = R_{j+k}.
\lab{rel_D} \ee
The order of the reflections is 2, the order of the rotations is $N$:
\be
R_j^2 =T_j^N = \mathcal{I}.
\lab{RT_order} \ee 
Moreover, it is well known that the two reflections $R_0, R_1$ suffice to generate the dihedral group $D_{N}$.
One can also introduce the $N$ operators $M_j= z R_j, \: j=0,1,\dots, N-1$. They satisfy the relations
\be
M_j M_k = q^j T_{k-j}, \quad M_j R_k = z T_{k-j}, \quad R_k M_j = q^j z^{-1} T_{j-k}.
\lab{zR2} \ee  

Consider now the commutation or anticommutation relations of the operators $L(N), R_k, T_j$. First, one notices that all rotations commute with the operator $L(N)$ that has the sieved Jacobi OPUC as eigenfunctions:
\be
[L(N), T_j] =0.
\lab{comm_LT} \ee  
The relations \re{comm_LT} follow immediately from the observation in view of the generic form of $\psi_n(z;N)$ given in \eqref{psi_nu} that
\be
T_j \psi_n(z;N) = \psi_n\left( q^j z;N \right) = \omega_{n,j}  \psi_n(z;N),
\lab{T_psi} \ee
where $\omega_{n,j}$ are some coefficients such that $|\omega_{n,j}|=1$.
The commutation relations between $L(N), R_j$ and $M_j$ are more involved and depend on the parity of $N$. 
For $N$ even we have 
\be
\{R_j,  L(N) \} = N(\alpha+\beta+1) R_j - (2\alpha+1) \sum_{k=0}^{N/2-1} T_{2k+j}  -(2\beta+1) \sum_{k=0}^{N/2-1} T_{2k+j+1}, \quad j=0,1,\dots, N-1 
\lab{acm_even_1} \ee
and
\be
\{M_j,  L(N) \} = \left((\alpha+ \beta+1)N +1 \right) M_j, \quad j=0,1,\dots, N-1.
\lab{acm_even_2} \ee
For $N$ odd we have
\be
\{R_j,  L(N) \} = N(\alpha+\beta+1) R_j - (\alpha+\beta+1) \sum_{k=0}^{N-1} T_k, \quad j=0,1,\dots, N-1 
\lab{acm_odd_1} \ee
and
\be
\{M_j,  L(N) \} = \left( N\alpha+ N\beta+ N+1 \right)  M_j + (\alpha- \beta) \sum_{k=0}^{N-1} q^{j+kJ} T_{j+k}, \quad j=0,1,\dots, N-1 ,
\lab{acm_odd_2} \ee
where $J=(N-1)/2$.
These relations can be taken to define a generalization of the circle Jacobi algebra \re{KM12}. Deriving the explicit expressions of the Verblunsky coefficients of the sieved Jacobi OPUC from the construction of the representations of this generalized circle Jacobi algebra alone, is an interesting problem which we defer to future work.

%\textcolor{red}{I think this should be amplified with the additional informations about commuting operators}.

\section{Eigenvalue equations for the sieved Jacobi polynomials on the real line} \label{sect:6}
\setcounter{equation}{0}
In this section we present the eigenvalue equations for the sieved Jacobi polynomials of the first and second kind on the real line. They will be obtained with the help of the eigenvalue equation \re{eig_SOPUC} of the sieved Jacobi OPUC.  

Introduce the operator
\be
H(N) = L^2(N) -N(\alpha+\beta+1)L(N).
\lab{H(N)} \ee
We have
\be
H(N) \psi_n(z;N) = \t \lambda_n(N) \psi_n(z;N)
\lab{H_psi} \ee
where
\be
\t \lambda_n(N) = \lambda^2_n(N) -N(\alpha+\beta+1)\lambda_n(N).
\lab{tlam} \ee
These eigenvalues satisfy the property
\be
\t \lambda_{2n} = \t \lambda_{2n-1} = \Lambda_n(N) 
\lab{tl_eo} \ee
where
\be
\Lambda_n(N) = n(n+N(\alpha+\beta+1)). 
\lab{Lambda_n} \ee
Apply the operator $H(N)$ to the polynomial $P_n(x(z);N)$. Using formulas \re{PQ_sieved} and \re{tl_eo} we have:
\begin{align}
  H(N) P(x(z);N) &= H(N) \psi_{2n}(z;N) + \left(1+a_{2n-1}(N)\right) H(N) \psi_{2n-1}(z;N) \nonumber\\  
  &= \Lambda_n(N) P_n(x(z);N).
\lab{HPL}
\end{align}
Hence the polynomials $P_n(x(z);N)$ are eigenfunctions of the operator $H(N)$.

\noindent Similarly we have 
\be
\t H(N) Q_n(x(z);N)= \Lambda_{n+1}(N) Q_n(x(z);N),
\lab{HQL} 
\ee  
where the operator $\t H(N)$ is related to the operator $H(N)$ by the similarity transformation
\be
\t H(N) = \phi^{-1}(z) H(N) \phi(z)
\lab{tH} \ee
with 
\be
\phi(z) = z-z^{-1}.
\lab{phi(z)} \ee
We thus arrive at
\begin{pr}
The sieved Jacobi polynomials $P_n(x(z);N)$ of the first kind satisfy the eigenvalue equation\re{HPL} while the sieved Jacobi polynomials of the second kind $Q_n(x(z);N)$ verify the eigenvalue equation \re{HQL}, where the operators $H(N)$ and $\t H(N)$ are defined by \re{H(N)} and \re{tH}.  
\end{pr}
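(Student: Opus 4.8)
The plan is to deduce both eigenvalue equations purely formally from the spectral equation \re{eig_SOPUC} of Proposition 1, combined with the expansions \re{PQ_sieved} of the real-line polynomials in the CMV Laurent basis. The essential observation is that although $P_n(x(z);N)$ is a two-term combination of $\psi_{2n}(z;N)$ and $\psi_{2n-1}(z;N)$, and these two Laurent polynomials carry \emph{distinct} eigenvalues $\lambda_{2n}(N)=-n$ and $\lambda_{2n-1}(N)=n+N(\alpha+\beta+1)$ under $L(N)$, the quadratic operator $H(N)$ of \re{H(N)} is engineered precisely so as to collapse this pair onto a single value. This is why the first-order operator $L(N)$ alone cannot have $P_n$ as an eigenfunction, whereas the second-order $H(N)$ can.

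First I would record that $H(N)$, being a polynomial in $L(N)$, inherits the eigenbasis of \re{eig_SOPUC}, giving $H(N)\psi_n(z;N)=\t\lambda_n(N)\psi_n(z;N)$ with $\t\lambda_n(N)$ as in \re{tlam}. Writing $c=N(\alpha+\beta+1)$, the spectral polynomial is $\t\lambda(\lambda)=\lambda^2-c\lambda=(\lambda-c/2)^2-c^2/4$, which is symmetric about $\lambda=c/2$. Since the eigenvalues $\lambda_{2n}(N)=-n$ and $\lambda_{2n-1}(N)=n+c$ read off from \re{lambda(N)} are reflections of one another through $c/2$, they are both sent to the common value $\Lambda_n(N)=n(n+c)$ of \re{Lambda_n}. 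This is exactly the degeneracy \re{tl_eo}, whose verification is a one-line computation; it is the conceptual heart of the argument.

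With the degeneracy in hand the first claim is immediate: by the first line of \re{PQ_sieved}, $P_n(x(z);N)$ is a linear combination of $\psi_{2n}(z;N)$ and $\psi_{2n-1}(z;N)$, both eigenvectors of $H(N)$ for the same eigenvalue $\Lambda_n(N)$, whence $H(N)P_n(x(z);N)=\Lambda_n(N)P_n(x(z);N)$, which is \re{HPL}. For the second kind I would apply the same reasoning to the second line of \re{PQ_sieved}, which exhibits $(z-z^{-1})Q_{n-1}(x(z);N)=\phi(z)Q_{n-1}(x(z);N)$, with $\phi(z)$ as in \re{phi(z)}, as another combination of $\psi_{2n}(z;N)$ and $\psi_{2n-1}(z;N)$. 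Thus $\phi(z)Q_{n-1}(x(z);N)$ is an eigenvector of $H(N)$ with eigenvalue $\Lambda_n(N)$. Conjugating by $\phi(z)$, i.e. introducing $\t H(N)=\phi^{-1}(z)H(N)\phi(z)$ as in \re{tH}, transfers the equation onto $Q_{n-1}(x(z);N)$ itself, and relabelling $n-1\mapsto n$ produces \re{HQL} with eigenvalue $\Lambda_{n+1}(N)$.

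There is no serious analytic obstacle here: once Proposition 1 is granted, the result follows from the eigenvalue degeneracy and the linearity of \re{PQ_sieved}. The one point deserving care is the well-definedness of the conjugated operator $\t H(N)$ on the space of polynomials in $x=z+z^{-1}$: one should confirm that $\phi^{-1}(z)H(N)\phi(z)$ returns a polynomial in $x$ rather than producing a spurious pole at $z=\pm1$, so that \re{HQL} is a genuine eigenvalue equation in the intended function space and not merely a formal identity. On the eigenfunctions this is automatic, since $H(N)$ preserves the span of $\psi_{2n}(z;N)$ and $\psi_{2n-1}(z;N)$; this is also the natural place to note that $\phi(z)^2=x^2-4$, which ties the similarity transformation to the extra factor $4-x^2$ distinguishing the second-kind weight in \re{ort_Q} from the first-kind weight in \re{ort_P}.
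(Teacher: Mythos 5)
Your proposal is correct and follows essentially the same route as the paper: $H(N)$ inherits the eigenbasis of $L(N)$, the quadratic $\lambda\mapsto\lambda^2-N(\alpha+\beta+1)\lambda$ identifies the eigenvalues of $\psi_{2n}$ and $\psi_{2n-1}$ as in \re{tl_eo}, and the two lines of \re{PQ_sieved} then give \re{HPL} directly and \re{HQL} after conjugation by $\phi(z)=z-z^{-1}$. Your added observations (the symmetry of the spectral polynomial about $N(\alpha+\beta+1)/2$ and the link between $\phi(z)^2=x^2-4$ and the second-kind weight) are correct embellishments of the same argument.
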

In light of definition \re{H(N)}, we can provide an explicit expression for the operator $H$. Indeed, from formula \re{L(N)_B} for the operator $L(N)$ we have 
\be
H(N) = z^2 \partial_z^2 + C(z;N) \partial_z + \sum_{k=0}^{N-1} D_k(z;N)\left(  R_k - \mathcal{I} \right) + \sum_{k=1}^{N-1} E_k(z;N) T_k
\lab{H(N)_sum} 
\ee
where the operators $T_k$ are defined as in \eqref{rot}.
%\be
%T_k f(z) = f\left(q^k z \right)
%\lab{T_k_def} \ee
The coefficients $D_k(z;N)$ and $E_k(z;N)$ are given by
\be
D_k(z;N) = A_k(z;N) \left( B(z) + B\left( q^k/z \right)  -N(\alpha+\beta+1) \right) + z A'_k(z;N),
\lab{E_k_gen} \ee 
\be
E_k(z;N) =  \sum_{i=0}^{N-1} A_i(z;N) A_{i+k}\left( q^i/z;N\right).
\lab{G_k_sum} \ee
In the sum \re{G_k_sum}, one can take into account the cyclic property of the coefficients $A_k(z)$:
\be
A_{k+N}(z;N) = A_k(z;N)
\lab{cycl_A} \ee
which follows straightforwardly from the explicit expressions \re{A_ev} and \re{A_odd}. 
The calculations then show that all the factors of the shift operators $T_k$ vanish:
\be
E_k(z) = 0, \quad k=1,\dots, N-1.
\lab{G=0} \ee
Moreover, it is seen that 
\be
 B(z) + B\left( q^k/z \right)  -N(\alpha+\beta+1) =0, \quad k=0,1, \dots, N-1
 \lab{BB=0} \ee
and that hence the coefficients in front of the reflection operators $R_k$ simplify to
\be  
D_k(z) = z A'_k(z;N).
\lab{E=zA} \ee
Finally, the term in front of the first derivative is found to be
\be
C(z) = z \left(1+N \left(\alpha +\beta +1\right)+\frac{2 N \left(\alpha +\beta +1+\left(\alpha -\beta \right) z^{N}\right)}{z^{2 N}-1}\right).
\lab{C(z)} \ee
Note last that on the space of the symmetric polynomials $P_n(z), Q_n(z)$ which only depend on the argument $x=z+1/z$, the reflection operators $R_k$ can be replaced by the rotation operators $T_{-k}$. Indeed, for any symmetric Laurent polynomial $f(z)=F(z+1/z)$, we have
\be
R_k f (z) = F\left(q^k/z + z q^{-k} \right) = f\left(zq^{-k}\right) = T_{-k} f(z).
\lab{R=T} \ee
 \noindent This is summed up as follows.
\begin{pr}\lab{Prop4}
The eigenvalue operator $H(N)$ for sieved Jacobi polynomials $P_n(z;N)$ has two equivalent expressions: 

    \begin{numcases}{H(N)=}
z^2 \partial_z^2 + C(z) \partial_z + z \sum_{k=0}^{N-1} A_k'(z;N) \left(R_{k} - \mathcal{I}  \right) \lab{H_expl1} \\
z^2 \partial_z^2 + C(z) \partial_z + z \sum_{k=1}^{N-1} A_k'(z;N) \left(T_{-k} - \mathcal{I}  \right) \lab{H_expl2} 
    \end{numcases}       
where the formulas for $A_k(z;N)$ and $C(z)$ are given by \re{A_ev}-\re{A_odd} and \re{C(z)} and where $R_k$ and $T_k$ are the reflection and  rotation operators
\be
R_k f(z) = f\left(q^k/z\right), \quad   T_k f(z) = f\left( z q^k\right)
\lab{T_k_f}. \ee
\end{pr}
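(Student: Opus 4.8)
The plan is to obtain the first form \re{H_expl1} by directly expanding the definition $H(N)=L^2(N)-N(\alpha+\beta+1)L(N)$ from \re{H(N)} using the representation \re{L(N)_B} of $L(N)$, and then to read off the second form \re{H_expl2} by restricting to symmetric Laurent polynomials via \re{R=T}. Writing $L(N)=z\partial_z+\sum_{k=0}^{N-1}A_kR_k+B$ with $A_k=A_k(z;N)$ and $B=B(z)$ understood as multiplication operators, the computation of $L^2(N)$ is governed by three elementary operator identities that I would record first: the Euler operator anticommutes with every reflection, $R_k(z\partial_z)=-(z\partial_z)R_k$; each reflection conjugates a multiplication operator into the reflected one, so that $A_kR_k\,B=A_k\,B(q^k/z)\,R_k$; and a product of two reflections is a rotation, $R_kR_j=T_{j-k}$ as in \re{RRT}. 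With these in hand the nine terms of the square sort themselves into the operator types appearing in \re{H(N)_sum}.

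The bookkeeping I would then carry out is as follows. The purely differential part contributes $z^2\partial_z^2+z\partial_z$. The cross terms between $z\partial_z$ and $\sum_kA_kR_k$ produce, via the product rule together with the anticommutation, the term $z\sum_kA_k'R_k$, while the two ``$A_kR_k(z\partial_z)$'' contributions cancel against each other, so no reflection acquires a surviving first-derivative piece. The cross terms with $B$, together with the $-N(\alpha+\beta+1)z\partial_z$ coming from $-N(\alpha+\beta+1)L(N)$, assemble the full first-order coefficient $C(z)$; I would verify that $C(z)=z\bigl(1+2B-N(\alpha+\beta+1)\bigr)$ reproduces \re{C(z)} by substituting \re{B_expl} and clearing the denominator $z^{2N}-1$. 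The coefficient multiplying each $R_k$ is exactly $D_k$ of \re{E_k_gen}, namely $zA_k'+A_k\bigl(B(z)+B(q^k/z)-N(\alpha+\beta+1)\bigr)$, while the double sum $\sum_{k,j}A_kA_j(q^k/z)R_kR_j$ produces, after using $R_kR_j=T_{j-k}$ and reindexing, the rotation coefficients $E_k$ of \re{G_k_sum}; the remaining identity-type pieces reorganize into the $-\sum_kD_k$ required by the $D_k(R_k-\mathcal I)$ packaging of \re{H(N)_sum}.

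The two cancellations that do the real work are \re{BB=0} and \re{G=0}, and establishing them is where I expect the main effort to lie. For \re{BB=0} one substitutes \re{B_expl} and checks $B(z)+B(q^k/z)=N(\alpha+\beta+1)$ using $q^{kN}=1$; this collapses $D_k$ to $zA_k'$ as in \re{E=zA}. For \re{G=0} one must show that every rotation coefficient $E_k=\sum_iA_i(z;N)A_{i+k}(q^i/z;N)$ vanishes for $k=1,\dots,N-1$; here I would use the cyclicity \re{cycl_A} to reduce the sums to the root-of-unity identity \re{sumgen}, treating the even-$N$ and odd-$N$ expressions \re{A_ev} and \re{A_odd} for $A_k$ separately, since the relevant geometric sums differ by the phases $\sigma_k$ or $\rho_k$. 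This parity split, together with the care needed so that every partial-fraction sum lands in the range where \re{sumgen} applies, is the step most likely to be delicate. Once $E_k=0$ and $D_k=zA_k'$ are in place, \re{H(N)_sum} reduces immediately to \re{H_expl1}.

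Finally, for \re{H_expl2} I would invoke \re{R=T}: on any symmetric Laurent polynomial $f(z)=F(z+1/z)$ one has $R_kf=T_{-k}f$. Replacing $R_k$ by $T_{-k}$ throughout $z\sum_{k=0}^{N-1}A_k'(R_k-\mathcal I)$ turns the $k=0$ summand into $zA_0'(T_0-\mathcal I)=0$, since $T_0=\mathcal I$, so the sum effectively starts at $k=1$ and yields \re{H_expl2}. Because the polynomials $P_n$ and $Q_n$ depend only on $x=z+1/z$, the two forms act identically on the relevant space, which is precisely the equivalence asserted by the proposition.
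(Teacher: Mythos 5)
Your proposal follows essentially the same route as the paper: expand $H(N)=L^2(N)-N(\alpha+\beta+1)L(N)$ from the form \re{L(N)_B}, sort the terms into the structure \re{H(N)_sum} with coefficients \re{E_k_gen}--\re{G_k_sum}, establish the two cancellations \re{BB=0} and \re{G=0} (the latter via the root-of-unity sums \re{sumgen}, consistent with the paper's unstated ``calculations''), and pass to \re{H_expl2} by the symmetric-polynomial identity \re{R=T}. The bookkeeping you describe (anticommutation of $z\partial_z$ with $R_k$, conjugation of multiplication operators, $R_kR_j=T_{j-k}$, and $C(z)=z(1+2B(z)-N(\alpha+\beta+1))$) is correct, so this is a sound reconstruction of the paper's argument.
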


\begin{remark}
The two operators \eqref{H_expl1} and \eqref{H_expl2} providing expressions for $H(N)$ in Proposition \ref{Prop4} have the same action on the space of symmetric Laurent polynomials i.e. such that $f(z)=f(1/z)$ and are hence indistinguishable when acting on symmetric polynomials like $P_n(z)$ or $Q_n(z)$. Hovever, these operators are not equivalent on the space of arbitrary Laurent polynomials. 
\end{remark}
\begin{remark}
   It easily seen that the operator $H(N)$ is self-adjoint as it is obtained, according to formula \re{H(N)}, from the operator $L(N)$ that was shown to be self-adjoint in Proposition \ref{Prop2}. 
\end{remark}

\section{Commuting operators} \label{sect:7}
\setcounter{equation}{0}
We shall observe in this section that the sieved Jacobi polynomials $P_n(z)$ are also eigenfunctions of the discrete operators
\begin{equation}
   Y_m = T_m + T_{-m}
\lab{Y_k_P}  
\end{equation}
built from the rotation operators $T_{\pm{m}}$ defined in \eqref{rot}. It will furthermore be pointed out at the end of this section that the sieved polynomials of second kind $Q_n(z)$ possess a similar property.
\begin{pr} \label{prop:5}
    The sieved Jacobi polynomials $P_n(N)$ are eigenfunctions of the operators $Y_m$:
    \begin{equation}
        Y_m P_n(z) = \omega_{m,n} P_n(z), \quad m=1,2,\dots, N-1. \lab{YP}
    \end{equation}
\end{pr}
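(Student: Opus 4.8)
The plan is to prove the statement by determining exactly which powers of $z$ occur in the Laurent polynomial $P_n(x(z);N)$ and then letting $Y_m$ act monomial by monomial. I would begin from the representation \eqref{PQ_sieved},
\begin{equation*}
P_n(x(z);N) = \psi_{2n}(z;N) + \left(1 + a_{2n-1}(N)\right)\psi_{2n-1}(z;N),
\end{equation*}
noting that, since the sieved Verblunsky parameters satisfy $|a_{2n-1}(N)| < 1$ by \eqref{a<1} and \eqref{a_n(N)}, the coefficient $1 + a_{2n-1}(N)$ is strictly positive; hence both terms genuinely contribute and neither can be discarded.

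Next I would use the sieving identity \eqref{Phi(N)} together with the definitions \eqref{sieved_psi} to record that, writing $2n = Nk_e + j_e$ and $2n-1 = Nk_o + j_o$ with $0 \le j_e, j_o \le N-1$, one has $\psi_{2n}(z;N) = z^{\,n-j_e}\Phi_{k_e}(z^{-N})$ and $\psi_{2n-1}(z;N) = z^{\,1-n+j_o}\Phi_{k_o}(z^{N})$. Because $\Phi_{k_e}(z^{-N})$ and $\Phi_{k_o}(z^{N})$ only involve powers of $z$ that are multiples of $N$, every monomial $z^p$ appearing in $\psi_{2n}(z;N)$ obeys $p \equiv n-j_e \pmod N$ while every monomial in $\psi_{2n-1}(z;N)$ obeys $p \equiv 1-n+j_o \pmod N$ --- this is just the content of \eqref{T_psi}, the exponents $n-j_e$ and $1-n+j_o$ being the integers $\nu$ of \eqref{psi_nu}. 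The crucial observation is then the congruence
\begin{equation*}
(n-j_e) + (1-n+j_o) = 1 + j_o - j_e \equiv 0 \pmod N,
\end{equation*}
which holds because $j_o \equiv j_e - 1 \pmod N$ (these being the residues of the consecutive integers $2n$ and $2n-1$). Thus the two summands of $P_n$ carry powers in the opposite residue classes $\pm \ell_0 \pmod N$, where $\ell_0 := n - j_e$, and therefore $P_n(x(z);N) = \sum_p c_p z^p$ with $c_p \ne 0$ only when $p \equiv \pm \ell_0 \pmod N$.

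Finally I would conclude by a direct computation: since $T_m z^p = q^{mp}z^p$ and $q^N = 1$, each surviving monomial satisfies $Y_m z^p = (q^{mp} + q^{-mp})z^p = 2\cos(2\pi m p/N)\,z^p$, and for both $p \equiv \ell_0$ and $p \equiv -\ell_0 \pmod N$ this factor equals $2\cos(2\pi m \ell_0/N)$; hence $Y_m P_n(z) = \omega_{m,n}P_n(z)$ with $\omega_{m,n} = 2\cos(2\pi m \ell_0/N)$. Conceptually this is forced by $[H(N),T_m]=0$, a consequence of \eqref{comm_LT} since $H(N)$ is a polynomial in $L(N)$: the operator $Y_m = T_m + T_{-m}$ must preserve the $H(N)$-eigenspace containing $P_n$, and the power count shows it in fact acts there as a scalar. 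The only real subtlety --- what I would treat as the heart of the argument --- is the correct identification of the residue classes in the middle paragraph, i.e. keeping track of the prefactors $z^{\,n-j_e}$ and $z^{\,1-n+j_o}$ produced by the distinct even- and odd-index formulas of \eqref{sieved_psi}; once these are in hand, the key congruence and the eigenvalue fall out immediately.
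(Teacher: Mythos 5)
Your argument is correct and rests on the same underlying mechanism as the paper's proof --- namely that the monomials of $P_n(x(z);N)$ have exponents confined to two opposite residue classes $\pm\ell_0 \bmod N$, on which $Y_m = T_m+T_{-m}$ acts as the single scalar $q^{m\ell_0}+q^{-m\ell_0}$ --- but the execution is genuinely different. The paper starts from $P_n=\psi_{2n-1}(z;N)+\psi_{2n-1}(1/z;N)$ as in \re{P_Phi}, expands $Y_mP_n$ into four terms, and reads off the phases from the parity-split identities \re{psi-psi_n_odd_N_even}--\re{psi-psi_n_odd_N_odd}, which forces separate cases according to the parities of $N$ and $j$. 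You instead use the decomposition \re{PQ_sieved} into $\psi_{2n}(z;N)$ and $\psi_{2n-1}(z;N)$, obtain the residue classes $n-j_e$ and $1-n+j_o$ directly from \re{Phi(N)} and \re{sieved_psi}, and let the single congruence $(n-j_e)+(1-n+j_o)\equiv 0 \pmod N$ do all the work uniformly, ending with the closed form $\omega_{m,n}=q^{m\ell_0}+q^{-m\ell_0}$, $\ell_0=n-j_e$. This buys more than brevity: it bypasses the identities \re{psi-psi_n_odd_N_even}--\re{psi-psi_n_odd_N_odd}, whose case split on the parity of $j$ (rather than of $k$) does not appear to cover all instances --- for example, with $N=2$, $n=1$ one has $P_1=z+1/z$ and $Y_1P_1=-2P_1$, which agrees with your $\omega_{1,1}=q+q^{-1}=-2$ but not with the value $+2$ that \re{om_2} would give. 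Two minor remarks: the positivity of $1+a_{2n-1}(N)$ is not actually needed, since the eigenvalue identity holds monomial by monomial whether or not a summand vanishes; and the closing appeal to $[H(N),T_m]=0$ is a pleasant consistency check but plays no logical role, as the scalar action is already established by the power count.
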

\begin{proof}
    This follows from the relations between $\psi_n(z;N)$ and $\psi_n(z)=\psi_n(z;1)$ recorded in Section \ref{sect:4}. Indeed, assume that 
$2n-1=kN +j, \: j=0,1,\dots, N-1$.
We have from \re{P_Phi}
\begin{align}
 Y_m P_n(z)&= P_n(zq^m) + P_n(zq^{-m}) \nonumber  \\
&=\psi_{2n-1}(zq^m;N)+ \psi_{2n-1}(zq^{-m};N) + \psi_{2n-1}(z^{-1} q^m;N) + \psi_{2n-1}(z^{-1} q^{-m};N).
\lab{YP_psi}    
\end{align}
With the help of formulas \re{psi-psi_n_odd_N_even} and \re{psi-psi_n_odd_N_odd}, we conclude that \re{YP} is verified with
\be
\omega_{m,n} = q^{\frac{m(N-j)}{2}} + q^{-\frac{-m(N-j)}{2}}
\lab{om_1} \ee
if $N$ and $j$ have the same parity or
\be
\omega_{m,n} = q^{\frac{m(-N+j+1)}{2}} + q^{-\frac{-m(-N+j+1)}{2}}
\lab{om_2} \ee 
if $N$ and $j$ have different parity. 
It is obvious from \eqref{om_1} and \eqref{om_2}, that $\omega_{m,n}$ can take no more then $N$ different values.
\end{proof}
\begin{remark}
Since according to Proposition \ref{prop:5}, the operators $H$ and $Y_m, m=1, \dots,N-1$,  can be diagonalized simultaneously, we may conclude that they commute among themselves. In view of this, one might replace the eigenvalue equation \eqref{HPL}, $H(N)P_n(z)=\Lambda_n(N)P_n(z)$, by the eigenvalue equation $\mathcal{H}(N)P_n(z)=\Omega_n(N)P_n(z)$ where
\begin{equation}
    \mathcal{H}(N)=H + \sum_{m=1}^{N-1} \tau_m Y_m \label{Hcal}
\end{equation}
with $\tau_i$ arbitrary constants and where consequently $\Omega_n(N)=\Lambda_n(N)+\sum_{m=1}^{N-1} \tau_m \omega_{m,n}$.
We may comment that the operator $H(N)$ is special in that its eigenvalues $\Lambda_n(N)$ are quadratic in $n$ and do not depend on $j$, while this is not so for the eigenvalues $\Omega_n(N)$ of $ \mathcal{H}$ recalling that $j=2n-1 \; \mbox{mod} \; N$.
\end{remark}

The sieved Jacobi polynomials $Q_n(z)$ of the second kind are analogously eigenfunctions of discrete operators $\t Y_m$ which commute among themselves and with the operator 
 $\t H(N)$ that appears in the eigenvalue equation \eqref{HQL}. These $\t Y_m$ are obtained through the similarity transformation that relates $\t H(N)$ to $H(N)$, namely
\be
\t Y_m = \phi^{-1}(z)   Y_m \phi(z), 
\lab{tYY} \ee
where $\phi(z) =z-z^{-1}$.
This yields the explicit expression
\be
\t Y_m = \left(  \frac{  zq^m  -q^{-m} z^{-1}}{z-z^{-1}}\right) T_m +  \left(  \frac{  zq^{-m}  -q^m z^{-1}}{z-z^{-1}}\right) T_{-m}.
\lab{tY} \ee
It follows that the polynomials $Q_n(z)$ are eigenfunctions  of the operator $\t Y_m$ since we have
\be
\t Y_m Q_{n-1}(z) = \omega_{m,n} Q_{n-1}(z),
\lab{YmQ} \ee
with $\omega_{m,n}$ given by \re{om_1} or \re{om_2}.

\section{Special cases} \label{sect:8}
\setcounter{equation}{0}
We shall focus in this section on the interesting families of polynomials that arise in special situations: first, when $N=1$ and $N=2$ and second, when $\alpha=\beta$ for arbitrary $N$.

\subsection{The generalized ultraspherical polynomials}

The case $N=1$ obviously leads to the ordinary (i.e. non-sieved) Jacobi polynomials. Equation \re{HPL} has no terms with reflection operators as is clear from the definition \eqref{H(N)_sum} of $H(1)$ and using \eqref{C(z)}, it is seen to coincide with the standard differential equation for the Jacobi polynomials (presented in terms of the argument $z$ instead of standard $x=z+1/z$). 

Set next $N=2$ . In this case the polynomials $P_n(x)$ satisfy the recurrence relation
\be
P_{n+1}(x) + u_n P_{n-1}(x) = xP_n(x),
\lab{rec_gultra} \ee
where
\be
u_{2n} = \frac{4n(\alpha+n)}{(\alpha+\beta+2n)(\alpha+\beta+2n+1)}, \quad u_{2n+2} = \frac{4(\beta+n+1)(\alpha+n+1)}{(\alpha+\beta+2n+2)(\alpha+\beta+2n+1)}.
\lab{u_gultra} 
\ee
The PRL $P_n(x)$ with coefficients \re{u_gultra} are identified as the generalized ultraspherical (or generalized Gegenbauer) polynomials introduced by Szeg\H{o} (see \cite{Bel} and \cite{Chihara} for details).
When $N=2$, the only nontrivial root of unity is $q=-1$; hence equation \re{HPL} will involve only one reflection operator namely,
 $Rf(z) =f(-z)$. This equation will explicitly read as follows:
\be
H P_n(x(z)) = \lambda_n P(x(z)),
\lab{eq_gu} \ee 
where the operator $H=H(2)$ is given by
\be
H = z^2 \partial_z^2 +C(z) \partial_z  - \frac{(2 \beta+1) z^2}{z^2+1} \left(T -\mathcal{I} \right)
\lab{H_gu} \ee
with
\be
C(z) = z \left( 2 \alpha+2 \beta+3 + \frac{4\left(\alpha+\beta+1 +\left(\alpha-\beta z^2 \right)\right)}{z^4-1} \right)
\lab{C_ug} \ee
and with the operator $T$ changing the sign of $z$:
\be
T f(z) = f(-z).
\lab{T_s} \ee
It can be checked that  equation \re{eq_gu} coincides with the Dunkl type differential equation found in \cite{Ben} for the generalized ultraspherical polynomials.

\subsection{The sieved ultraspherical polynomials}

When $\beta=\alpha$ and $N$ is arbitrary, the polynomials $P_n(x;N)$ and $Q_n(x;N)$ are respectively, the sieved ultraspherical polynomials of the first and second kind found in \cite{AAA}. Consider first those of the first kind. They obey the recurrence relation \re{rec_gultra} with coefficients given by
\be
u_{nN} = \frac{2n}{2\alpha+2n+1}, \quad u_{nN+1} = \frac{4\alpha+2n+2}{2\alpha+2n+1}, \quad n=0,1,2\dots
\lab{u_1nd} 
\ee 
and
\be
u_n=1 \quad \text{for all other} \quad n. \nonumber
\ee
\noindent The next proposition regarding these polynomials stems from the results of Section 6.
\begin{pr}
The sieved ultraspherical polynomials of the first kind $P_n(x;N)$ satisfy the eigenvalue equation
\be 
H(N) P_n\left(z+1/z; N \right) = n\left(n+ \left(2 \alpha+1 \right)N \right) P_n\left(z+1/z ; N \right), \; n=0,1,2,\dots,
\lab{EE_ultra} \ee
where the operator $H(N)$ has equivalently either one of the following two expressions
\begin{numcases}{H(N)=}
   z^2 \partial_z^2 + C(z) \partial_z + \sum_{k=0}^{N-1} B_k(z)\left(R_{k} - \mathcal{I} \right)  
\lab{L_ultra}\\
z^2 \partial_z^2 + C(z) \partial_z + \sum_{k=1}^{N-1} B_k(z)\left(T_{-k} - \mathcal{I} \right) , 
\lab{L_ultra_t}
\end{numcases}
with 
\be
q=\exp\left(\frac{2\pi i}{N} \right), \quad T_k f(z) = \left(q^k z \right)
\lab{q_T} \ee
and
\be
B_k(z) = \frac{2(2 \alpha+1)q^k z^2}{\left( q^k -z^2\right)^2}, \quad C(z) = z \left[1+N(2 \alpha+1) + \frac{2N(2\alpha+1)}{z^{2N}-1} \right]. 
\lab{AC_ultra} 
\ee

\end{pr}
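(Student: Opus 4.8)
The plan is to obtain this proposition as a direct specialization to $\beta=\alpha$ of the general results already established in Section \ref{sect:6}, so that essentially no new computation beyond one differentiation is required. The eigenvalue equation itself needs no fresh argument: the proposition of Section \ref{sect:6} asserts \re{HPL}, namely $H(N) P_n(x(z);N) = \Lambda_n(N) P_n(x(z);N)$ with $\Lambda_n(N)=n(n+N(\alpha+\beta+1))$ from \re{Lambda_n}. Setting $\beta=\alpha$ turns $\alpha+\beta+1$ into $2\alpha+1$, so that $\Lambda_n(N)=n(n+(2\alpha+1)N)$, which is exactly the eigenvalue appearing in \re{EE_ultra}. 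Thus \re{EE_ultra} is immediate once the stated form of $H(N)$ is confirmed.

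To produce the two expressions \re{L_ultra} and \re{L_ultra_t} for $H(N)$, I would invoke Proposition \ref{Prop4}, which supplies the equivalent forms \re{H_expl1} and \re{H_expl2} with the coefficient of each reflection $R_k$ (resp. rotation $T_{-k}$) equal to $z A_k'(z;N)$, as recorded in \re{E=zA}. The task therefore reduces to identifying $B_k(z)=z A_k'(z;N)$ and to specializing $C(z)$, both under $\beta=\alpha$.

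The one point that deserves genuine attention, and is really the content of the ultraspherical specialization, is the collapse of the parity cases. In the general Jacobi setting the coefficients $A_k(z;N)$ take two distinct forms according to the parity of $N$ (compare \re{A_ev} and \re{A_odd}), the odd-$N$ expression carrying the extra phases $\rho_k$ multiplying $(\alpha-\beta)$. Setting $\beta=\alpha$ annihilates every $(\alpha-\beta)$ term and reduces the even-$N$ prefactor $\sigma_k=\alpha+\beta+1+(-1)^k(\alpha-\beta)$ to the constant $2\alpha+1$; both parities then coincide with the single formula \re{A_ultra}, namely $A_k(z;N)=(2\alpha+1)z^2/(q^k-z^2)$. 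Verifying this merger is the essential step, since it is what guarantees that the single pair \re{L_ultra}-\re{L_ultra_t}, with coefficients \re{AC_ultra}, is valid uniformly in $N$ without any parity distinction.

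The remaining steps are routine differentiation. Differentiating \re{A_ultra} gives $A_k'(z;N)=(2\alpha+1)\,2zq^k/(q^k-z^2)^2$, whence $B_k(z)=z A_k'(z;N)=2(2\alpha+1)q^k z^2/(q^k-z^2)^2$, matching \re{AC_ultra}, the double pole at $z^2=q^k$ being the expected signature of a second-order operator. Finally, setting $\beta=\alpha$ in the general expression \re{C(z)} removes the $(\alpha-\beta)z^N$ term and replaces $\alpha+\beta+1$ by $2\alpha+1$, yielding $C(z)=z[1+N(2\alpha+1)+2N(2\alpha+1)/(z^{2N}-1)]$, precisely the $C(z)$ of \re{AC_ultra}. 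I expect no real obstacle here; the only care needed is in checking the parity collapse just described, after which the proposition follows by straightforward substitution.
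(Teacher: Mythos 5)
Your proposal is correct and follows essentially the same route as the paper, which likewise obtains this proposition by specializing Proposition \ref{Prop4} and the eigenvalue $\Lambda_n(N)$ of \re{HPL} to $\beta=\alpha$ (the paper simply states that the result ``stems from the results of Section 6'' without writing out the details). Your verification that the even-$N$ and odd-$N$ forms of $A_k(z;N)$ merge into \re{A_ultra} when $\alpha=\beta$, and the computation $B_k(z)=zA_k'(z;N)$, are exactly the omitted checks.
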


Coming to the sieved ultraspherical polynomials of the second kind, we see that those PRL have the recurrence coefficients 
\be
u_{nN} = \frac{2n}{2\alpha+2n+1}, \quad u_{nN-1} = \frac{4\alpha+2n+2}{2\alpha+2n+1}, \; n=1,2, 3,  \dots
\lab{u_2nd} 
\ee 
and 
\be
u_n=1 \quad \text{for all other} \quad n. \nonumber
\ee
The operator $\t H(N)$ defining the specialization of the eigenvalue equation \eqref{HQL} for the polynomials $Q_n(z;N)$ can be obtained from formula \re{tH}. The spectrum $\Lambda_{n+1}(N)$ is provided by \eqref{Lambda_n} and it will be recalled that we are considering the particular case $\alpha=\beta$. For standardization purposes, the eigenvalue equation will be rewritten in the form $\hat{H}_n(N)Q_n(N)=\Xi_n(N)Q_n(N)$ with $\Xi_0(N)=0$. This will be achieved by subtracting the constant $\Lambda_1(N)$ from $\tilde{H}(N)$. Proceeding with this reformatting, it is readily seen that 
\begin{equation}
    \Xi_n(N)=\Lambda_{n+1}(N)-\Lambda_1(N)=n(n+N(2\alpha+1)+2),
\end{equation}
while in view of \eqref{tH}, $\hat{H}(N)$ is given by
\begin{equation}
    \hat{H}(N)=\left(z-z^{-1} \right)^{-1} H(N) \left(z-z^{-1} \right) - \left(N(2\alpha+1) +1 \right) \mathcal{I}.
\lab{tH_Q} 
\end{equation}
After straightforward computations to perform the conjugation and carry out simplifications, one arrives at the following result.

\begin{pr}
The sieved ultraspherical polynomials of the second kind $Q_n(x;N)$ satisfy the eigenvalue equation
\be 
\hat{H}(N) Q_n\left(z+1/z; N \right) = n\left(n+ \left(2 \alpha+1 \right)N + 2\right) Q_n\left(z+1/z ; N \right), \; n=0,1,2,\dots,
\lab{EQ_ultra} \ee
where $\hat{H}(N)$ takes equivalently one of the following two forms
\begin{numcases}{\hat{H}(N)=}
z^2 \partial_z^2 + \hat{C}(z) \partial_z + \sum_{k=0}^{N-1} \hat{B}_k(z)\left(R_{k} - \mathcal{I} \right)  
\lab{HQ_ultra}\\
z^2 \partial_z^2 + \hat{C}(z) \partial_z + \sum_{k=1}^{N-1} \hat{B}_k(z)\left(T_{-k} - \mathcal{I} \right),  
\lab{HQ_ultra_t}
\end{numcases}
and where 
\be
\hat{B}_k(z) = \frac{q^{2k}-z^2}{q^k\left(z^2-1 \right)} B_k(z), \quad \hat{C}(z) = C(z) +\frac{2z(z^2+1)}{z^2-1}, 
\lab{BC_Q} \ee
with $B_k(z)$ and $C(z)$ given by \re{AC_ultra}. 
\end{pr}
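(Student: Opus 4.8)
The plan is to regard the eigenvalue equation and the conjugation relation as already settled, and to concentrate solely on producing the explicit form of $\hat H(N)$. Indeed, the equation $\hat H(N)\,Q_n(z+1/z;N) = \Xi_n(N)\,Q_n(z+1/z;N)$ with $\Xi_n(N)=n\bigl(n+(2\alpha+1)N+2\bigr)$ follows at once from the established equation \eqref{HQL} together with the definition \eqref{tH_Q} of $\hat H(N)$ as a shifted conjugate of $H(N)$ by $\phi(z)=z-z^{-1}$. What remains is purely computational: starting from the ultraspherical expression \eqref{L_ultra} for $H(N)$ with coefficients \eqref{AC_ultra}, I would carry out the similarity transformation $\phi^{-1}(z)H(N)\phi(z)$ term by term, subtract the constant $N(2\alpha+1)+1$, and then show that the outcome organizes itself into the two claimed forms \eqref{HQ_ultra} and \eqref{HQ_ultra_t}.

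For the differential part, conjugation of $z^2\partial_z^2$ by multiplication by $\phi$ produces $z^2\partial_z^2 + 2z^2(\phi'/\phi)\partial_z + z^2(\phi''/\phi)$, while conjugation of $C(z)\partial_z$ yields $C(z)\partial_z + C(z)(\phi'/\phi)$. Using $\phi'/\phi = (z^2+1)/\bigl(z(z^2-1)\bigr)$, the first-derivative corrections combine to give precisely $\hat C(z)=C(z)+2z(z^2+1)/(z^2-1)$ as recorded in \eqref{BC_Q}. For the reflection part, since $R_k\phi(z)=\phi(q^k/z)R_k$, each term $B_k(z)R_k$ becomes $B_k(z)\bigl(\phi(q^k/z)/\phi(z)\bigr)R_k$; a short calculation gives $\phi(q^k/z)/\phi(z)=(q^{2k}-z^2)/\bigl(q^k(z^2-1)\bigr)$, so that $B_k(z)R_k\mapsto \hat B_k(z)R_k$ with $\hat B_k$ exactly as in \eqref{BC_Q}.

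The decisive step, and the one I expect to be the main obstacle, is to verify that no bare multiplication (potential) term survives. Regrouping $\hat B_k R_k = \hat B_k(R_k-\mathcal I)+\hat B_k\,\mathcal I$, the scalar terms to be collected are those coming from $z^2(\phi''/\phi)$, from $C(\phi'/\phi)$, the identity parts $-\sum_k B_k$ of the original $(R_k-\mathcal I)$, the regrouped $+\sum_k\hat B_k$, and the subtracted constant. The claim amounts to the identity
\begin{equation}
z^2\frac{\phi''}{\phi} + C(z)\frac{\phi'}{\phi} + \sum_{k=0}^{N-1}\bigl(\hat B_k(z)-B_k(z)\bigr) - \bigl(N(2\alpha+1)+1\bigr) = 0 .
\end{equation}
I would prove this by first reducing $\sum_k(\hat B_k-B_k)$ to the single scalar sum $\tfrac{2(2\alpha+1)z^2}{z^2-1}\sum_k \tfrac{1+q^k}{q^k-z^2}$, then evaluating $\sum_k (q^k-z^2)^{-1}$ and $\sum_k q^k(q^k-z^2)^{-1}$ in closed form by the root-of-unity residue technique already used for \eqref{sumgen}, and finally checking the rational-function cancellation against the differential-part potential and the constant. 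The manipulation is elementary but the bookkeeping is delicate, and it is exactly here that the particular shape of $\hat B_k$ in \eqref{BC_Q} is forced.

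Finally, to obtain the second expression \eqref{HQ_ultra_t}, I would invoke \eqref{R=T}: on the space of symmetric Laurent polynomials $f(z)=f(1/z)$, which contains the $Q_n(z+1/z;N)$, one has $R_0-\mathcal I=0$ while $R_k$ acts as $T_{-k}$ for $k=1,\dots,N-1$. Hence the sum $\sum_{k=0}^{N-1}\hat B_k(R_k-\mathcal I)$ collapses to $\sum_{k=1}^{N-1}\hat B_k(T_{-k}-\mathcal I)$ on such functions, which is precisely \eqref{HQ_ultra_t}. The two forms of $\hat H(N)$ thus agree on the polynomials of interest while differing on general Laurent polynomials.
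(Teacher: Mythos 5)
Your proposal is correct and follows the same route as the paper: the eigenvalue equation is read off from \eqref{HQL} together with the shifted conjugation \eqref{tH_Q}, and the explicit forms \eqref{HQ_ultra}--\eqref{HQ_ultra_t} are obtained by carrying out the similarity transformation by $\phi(z)=z-z^{-1}$ term by term. The paper dismisses the latter as ``straightforward computations''; you usefully isolate the key scalar cancellation identity (which does check out via the root-of-unity sums \eqref{sumgen}) and the reduction $R_k\mapsto T_{-k}$ on symmetric Laurent polynomials, but the underlying argument is the same.
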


\section{Conclusion} \label{sect:9}
\setcounter{equation}{0}
Summing up, we have found that the CMV Laurent polynomials associated to the sieved Jacobi polynomials on the unit circle are eigenfunctions of a  
Dunkl type differential operator of first order. This paved the way to showing that the sieved Jacobi polynomials of first and second type on the real line satisfy eigenvalue equations involving Dunkl type differential operator of second order.

These results provide a long-awaited demonstration that the sieved Jacobi polynomials are bispectral. In contrast to the case of the classical and even of the little and big -1 Jacobi polynomials \cite{VZ_little-1}, \cite{VZ_big-1} \cite{DVZ}, the operators that intervene have a rather complicated structure that involves many reflections. To the best of our knowledge such operators have not so far appeared in the context of orthogonal polynomials in one variable. 

This study raises various interesting questions. For instance, what are the most general operators of Dunkl type that have orthogonal polynomials as eigenfunctions? Or, can one identify an algebra that would allow a representation theoretic description of the sieved Jacobi polynomials?  These are issues that we plan to explore in the future.

\vspace{23mm}

{\large \bf Acknowledgments.} 
AZ thanks the Centre de Recherches Math\'ematiques (Universit\'e de
Montr\'eal) for its hospitality when this project was initiated.   
The research of LV is supported in part by a research grant from the Natural Sciences and Engineering Research Council
(NSERC) of Canada.

\vspace{5mm}

\bb{99}

\bi{Charris} B.H.Aldana, J.A.Charris and
O.Mora-Valbuena, {\it  On Block Recursions,
Askey's Sieved Jacobi Polynomials
and two related Systems}, Colloqium Mathematicum
{\bf 78} 1998, 57--91.

\bi{AAA}  W.Al-Salam, W. R. Allaway and R.Askey, {\it Sieved ultraspherical polynomials}, Trans. Amer. Math. Soc. {\bf 284} (1984), 39--55.

\bi{Askey} R.Askey, {\it R. Askey, Orthogonal polynomials old and new, and some combinatorial connec-
tions}, in: Enumeration and Design, D. M. Jackson and S. A. Vanstone (eds.), Academic
Press, Toronto, Ont., 1984, 67--84.

\bi{Badkov} V.M.Badkov, {\it Systems of orthogonal polynomials explicitly represented by the Jacobi polynomials}, Mathematical
Notes of the Academy of Sciences of the USSR {\bf 42}, 858--863 (1987).

\bi{Bel} S. Belmehdi, {\it Generalized Gegenbauer polynomials}, J. Comput. Appl Math. {\bf 133} (2001), 195--205.

%\bi{BG} F.Bouzeffour and M.Garayev, {\it Fractional supersymmetric quantum mechanics and lacunary Hermite polynomials}, Analysis and Math.Physics, {\bf 11} (1921), article 17. arXiv:1810.08275v2.

\bi{BIW}  J. Bustoz, M. E. H. Ismail, and J. Wimp, {\it On sieved orthogonal polynomials VI: differential
equations}, Differential and Integral Equation {\bf 3} (1990) 757--76.

%\bi{Cast} K Castillo, MN de Jesus, J Petronilho, {\it On semiclassical orthogonal polynomials via polynomial mappings II: Sieved ultraspherical polynomials revisited},    Depart. de Mat., Univers. de Coimbra,Preprint Number 17--36 (2018).

\bi{Ben} Y. Ben Cheikh and M.Gaied, {\it Characterization of the Dunkl-classical symmetric orthogonal polynomials},
Appl. Math. and Comput. {\bf 187}, (2007) 105--114.

\bi{Cher}  I. Cherednik, {\it Double affine Hecke algebras, Knizhnik-Zamolodchikov equations, and Macdonald's operators}, Int. Math. Res. Not. (1992), no. 9, 171--180.

\bi{Chihara}  T. Chihara, An Introduction to Orthogonal Polynomials, Gordon and Breach, NY, 1978.

\bi{DVZ} Derevyagin, M., Vinet, L., Zhedanov, A., {\it CMV matrices and Little and Big -1Jacobi Polynomials}, Constr. Approx. {\bf 36} (2012), 513--535.

%\bi{DO} C.F.Dunkl and E.M.Opdam, {\it Dunkl operators for complex reflection groups}, Proc. London Math. Soc. , {\bf 86} , (2003) , 70--108. 

%\bi{GIVZ} V.Genest, M.E.H.Ismail, L.Vinet and Alexei Zhedanov, {\it Tridiagonalization of the hypergeometric operator and the Racah--Wilson algebra}, Proc. Amer. Math. Soc. {\bf 144} (2016), 4441--4454. 

\bi{GVA} J. S. Geronimo and W. Van Assche, {\it Orthogonal polynomials on several intervals via a
polynomial mapping}, Trans. Amer. Math. Soc. {\bf 308} (1988), 559--581.

\bi{Ger} Ya. L. Geronimus,\quad {\it Polynomials Orthogonal on a
Circle and their Applications}, \\ Am.Math.Transl.,Ser.1 {\bf
3}(1962), 1-78.

\bi{GV} F.A.Gr\"unbaum and L. Vel\'asquez, {\it The CMV Bispectral Problem}, IMRN, {\bf 2017}, (2017),  5833--5860. Arxiv 1607.01962v2

\bi{Ismail} M.E.H.Ismail, {\it Classical and Quantum orthogonal
polynomials in one variable}. Encyclopedia of Mathematics and its
Applications (No. 98), Cambridge, 2005.

\bi{Ismail_Li} M.E.H.Ismail and X.Li, {\it On sieved orthogonal polynomials. IX: Orthogonality on the unit circle}, Pacific J.Math. {\bf 153} (1992), 289--297.

%\bi{HM}  A. Herrera-Poyatos, Pieter Moree, {\it Coefficients and higher order derivatives of cyclotomic polynomials: Old and new}, Expos. Math. %(to appear), ArXive 1805.05207.

\bi{KLS} R. Koekoek, P. A. Lesky and R. F. Swarttouw. Hypergeometric orthogonal polynomials
and their q-analogues. Springer Science \& Business Media, 2010.

\bi{Koo} T.Koornwinder, {\it The Relationship between Zhedanov's Algebra $AW(3)$
and the Double Affine Hecke Algebra in the Rank One Case}, SIGMA {\bf 3} (2007), 063.

\bi{KB} T.Koornwinder and F.Bouzeffour, {\it Nonsymmetric Askey-Wilson polynomials as vector-valued polynomials}, Appl.Anal. {\bf 90}, 2011, 731--746. arXiv:1006.1140v3.

%\bi{Kron} L.~Kronecker, {\it Zwei S\"atze 
%\"uber Gleichungen mit ganzzahligen Coefficienten}, Crelle, Oeuvres
%I (1857) 105--108.

%\bi{Lang} S.Lang, Algebra.  

\bi{Simon} B.Simon, {\it Orthogonal Polynomials On The Unit
Circle}, AMS, 2005.

\bi{Szego} G. Szeg\H{o}, {\it Orthogonal Polynomials}, American Mathematical Society, 1939. 4th Edition, 1975

%\bi{VZ_Bochner} L.Vinet and A.Zhedanov, {\it Generalized Bochner theorem: Characterization of the Askey-Wilson polynomials}, J. Comp. Appl. Math. {\bf 211}, (2008), 45--56.

\bi{VZ_Askey} L.Vinet and A.Zhedanov, {\it An algebraic treatment of the Askey biorthogonal polynomials on the unit circle}, 
Forum of Mathematics, Sigma , {\bf 9} , 2021 , e68. arXiv:2102.01779v1.

\bi{VZ_JOPUC} L.Vinet and A.Zhedanov, {\it The CMV bispectrality of the Jacobi polynomials on the unit circle}, arXiv:2412.11031v1.

\bi{VZ_little-1} L.Vinet and A.Zhedanov, {\it A ``missing" family of classical orthogonal polynomials}, Journal of Physics A: Mathematical and Theoretical, {\bf 44}, 085201, 2011.

\bi{VZ_big-1} L.Vinet and A.Zhedanov, {\it A limit $q=-1$ for the big $q$-Jacobi polynomials}, Transactions of the American Mathematical Society, {\bf 364}, pp 5491-5507, 2012.

\eb

\end{document}